\documentclass{siamltex}
\usepackage{amsmath}
\usepackage{amssymb}
\usepackage{graphicx}
\usepackage{subfigure}
\usepackage{algorithm}

\newcommand{\R}{\mathcal{R}}
\renewcommand{\P}{P}
\renewcommand{\t}{^{\top}}
\newcommand{\lf}{\text{\sc lfil}}
\newtheorem{Theorem}{Theorem}[section]
\newtheorem{Remark}[Theorem]{Remark}

\newcommand{\wP}{\widehat{P}_{k+1}}
\newcommand{\wQ}{\widetilde{Q}}
\newcommand{\ww}{\widehat{P}}

\newcommand{\wk}{\widehat{P}_{k}}
\newcommand{\se}{\sqrt{\|\fe_k\|}}
\newcommand{\IN}{\text{\sc Imax}}
\newcommand{\IL}{\text{\sc Imax$_{PCG}$}}
 \newcommand{\JJ}{J_{k+1}^{1/2}}
 \newcommand{\Jk}{J_{k}^{1/2}}
 \newcommand{\sJ}{J_{k}^{1/2}}
 \newcommand{\sn}{J_{0}^{1/2}}

\usepackage{geometry}
\usepackage{here}

\newcommand{\fc}{\mbox{\boldmath  $c$}}
\newcommand{\fe}{\mbox{\boldmath  $e$}}
\newcommand{\fg}{\mbox{\boldmath  $g$}}

\newcommand{\fr}{\mbox{\boldmath  $r$}}
\newcommand{\fs}{\mbox{\boldmath  $s$}}
\newcommand{\ft}{\mbox{\boldmath  $t$}}
\newcommand{\fu}{\mbox{\boldmath  $u$}}
\newcommand{\fv}{\mbox{\boldmath  $v$}}
\newcommand{\fw}{\mbox{\boldmath  $w$}}
\newcommand{\fx}{\mbox{\boldmath  $x$}}
\newcommand{\fy}{\mbox{\boldmath  $y$}}
\newcommand{\fz}{\mbox{\boldmath  $z$}}

\geometry{left=3.4cm, right=3.4cm, top = 3.0cm, bottom = 3cm}
\title{Efficiently preconditioned Inexact Newton methods for large symmetric eigenvalue problems}
%\date{October 29, 2013}
\author{L. Bergamaschi\footnotemark[1] \footnotemark[2],
and A. Mart\'{\i}nez\footnotemark[4]\\[.4em] }
% {\text October 29, 2013}}

%\usepackage{fancyhdr}
%\setlength{\headheight}{15.2pt}
%\pagestyle{fancy}

\begin{document}
%\lhead{submitted to OMS}
%\rhead{BFGS preconditioning Newton methods for  SPD eigenproblems}

%\pagestyle{myheadings}
%\thispagestyle{plain}
\maketitle

\renewcommand{\thefootnote}{\fnsymbol{footnote}}

\footnotetext[1] {Corresponding author}
\footnotetext[2] {Department of Civil, Environmental and Architectural Engineering,
                  University of Padua, via Trieste 63, 35100 Padova, Italy, {\tt luca.bergamaschi@unipd.it}.  The work of this author has been partially supported by the Spanish grant
	  MTM2010-18674}
\footnotetext[4] {Department of Mathematics,
                  University of Padua, via Trieste 63, 35100 Padova, Italy, {\tt acalomar@dmsa.unipd.it}}

\renewcommand{\thefootnote}{\fnsymbol{footnote}}

\begin{abstract}
	In this paper we propose an efficiently preconditioned Newton method for the computation of the leftmost eigenpairs
	of large and sparse symmetric positive definite matrices.
	A sequence of preconditioners based on the BFGS update formula is proposed, for the Preconditioned Conjugate Gradient
	solution of the linearized Newton system  to solve $A \fu  = q(\fu) \fu$, $q(\fu)$ being the Rayleigh Quotient.  
	We give theoretical evidence that the sequence of preconditioned Jacobians remains close to the identity matrix if the initial
	preconditioned Jacobian is so. 
	Numerical results onto matrices arising from various realistic problems with size up to one million
	unknowns account for the efficiency of the proposed algorithm which reveals competitive with the Jacobi-Davidson method
	on all the test problems.
\end{abstract}
{\bf Key words}.
	Eigenvalues, SPD matrix, Newton method, BFGS update, Incomplete Cholesky preconditioner

	\medskip

AMS subject classifications. 65F05, 65F15, 65H17

\medskip

\section{Introduction}
Consider a  symmetric positive definite (SPD) matrix $A$, which is also assumed to be
large and sparse.
We will denote as 
\[ \lambda_1 < \lambda_2 < \ldots < \lambda_p < \ldots < \lambda_n \]
the eigenvalues of $A$ and 
\[ \fv_1,  \fv_2, \ldots,  \fv_p, \ldots, \fv_n \]
the corresponding (normalized) eigenvectors.

The computation of the $p \ll n $ leftmost eigenpairs of $A$
is a common task in many scientific
applications.
Typical examples are offered by the vibrational analysis of
mechanical structures \cite{bat82}, the lightwave technology~\cite
{zob92}, electronic structure calculations \cite{saad-et-al-1995}, and the spectral superposition approach for the solution
of large sets of 1st order linear differential equations~\cite{gam93time}.
Computation of a few eigenpairs is also crucial in the approximation of the generalized inverse of
the graph Laplacian~\cite{BF_2012,BBF_2013}.

In this paper we propose to use an efficiently preconditioned Newton method for the nonlinear system of equations:
\begin{equation}
	\label{nonlinear}
	A \fu -  q(\fu) \fu = 0 \qquad  \text{where} \qquad
 q(\fu) = \dfrac{\fu\t A \fu}{\fu\t \fu} 
\end{equation}
is the Rayleigh Quotient.
The idea of employing the Newton method for this nonlinear system
is obviously not new:
among the others we mention Davidson (\cite{davidson}) who approximated the Jacobian of (\ref{nonlinear}) with $\text{diag}(A - q(\fu)I)$
and combined this system solution with a Rayleigh-Ritz procedure. 

The Newton  method in the unit sphere
\cite{simonciniRQI,freitag} or Newton-Grassman method, constructs a sequence of vectors $\{\fu_k\}$ by  solving the linear systems
\begin{equation}
	\label{gras}
(I - \fu_k \fu_k\t) (A - \theta_k I) (I - \fu_k \fu_k\t) \fs  = - (A \fu_k - \theta_k \fu_k),  \qquad
				      \theta_k = \dfrac{\fu_k\t A \fu_k}{\fu_k\t \fu_k}  
\end{equation}
ensuring that the correction $\fs$ be orthogonal to $\fu_k$. Then the next
approximation is set as $\fu_{k+1} = \ft \|\ft\|^{-1}$ where $\ft  = \fu_k + \fs $. Linear system (\ref{gras})
is shown to be better conditioned than the one with $A - \theta_k I$.
The same linear system represents the {\em correction equation} in the well-known Jacobi-Davidson method~\cite{slejpenvdvSIMAX96},
which in its turn can be viewed as an accelerated Inexact Newton method \cite{vdv}.
When $A$ is SPD and the leftmost eigenpairs are being sought, it has been proved
in \cite{notay} that the Preconditioned Conjugate Gradient (PCG) method can be employed in the solution
of the correction equation.

There are still a number of drawbacks that advises against using  pure Newton  method:
first, the choice of an initial vector. In the Jacobi-Davidson algorithm the Rayleigh-Ritz procedure implements
a sort of restart that in part solves this problem.
Second, even the projected Jacobian $(I - \fu_k \fu_k\t) (A - \theta_k I) (I - \fu_k \fu_k\t)$ in the correction equation 
is ill-conditioned, or at least more ill-conditioned than
the original matrix $A$, being its smallest eigenvalue smaller than $\lambda_{j+1}-\lambda_j$ when seeking the $j$-th eigenpair
(see Lemma \ref{lemmaNOTAY}).
In \cite{wusaad} the problem of finding a ``well-conditioned'' Jacobian matrix for the Newton method is considered by the 
authors, who describe some low-rank variants of the Jacobian of (\ref{nonlinear}) and perform a large set of numerical experiments
showing that the best choice is problem dependent.
Many authors have also tried to find a good  preconditioner for matrix $A - \theta_k I$ since it is the
key for efficient iterative solution of the correction equation. This remains an open problem (see for example \cite{knyazev98}).

Starting from the findings  in  \cite{bergamaschi-et-al-06} and \cite{bbm08sisc},
the main contribution of this paper is the development of a sequence of preconditioners $\{  P_k \}$
for the PCG solution of the Newton correction equation (\ref{gras}),
based on the BFGS update of a given initial preconditioner for the coefficient matrix $A$.
We will theoretically prove that the sequence of the preconditioned Jacobians will remain close to the identity matrix if the first 
preconditioned Jacobian is so. 
A similar approach has been used in \cite{xue} where a rank-two modification of a given preconditioner is
used to accelerated MINRES in the framework of the Inexact Rayleigh Quotient Iteration.

The BFGS formula as used in this paper is one more example of the strict connection between two
overlapping worlds: numerical linear algebra and optimization. Many papers have discussed 
this relationship. Among the others we refer to~\cite{dapuzzo} and the references therein.
Also the problem of finding efficient preconditioners for the linearized systems has become a crucial issue 
for the efficient implementation of the interior point method in constrained optimization, see e.g.
\cite{JerGon,velazco,bergonzil02,bgvzCoap05}. Often, the coefficient matrices of the linear systems to be solved at each Newton iteration
are  very close in structure and this motivates a number of works which study the possibility of
updating a given preconditioner to obtain with small computational effort  a new preconditioner
\cite{bbmSISC2011,bddmSINUM}.

To overcome the problem of the starting point, 
we also propose to start the Newton process after a small number of iterations of a Conjugate Gradient procedure
for the minimization of the Rayleigh Quotient  (DACG, \cite{bgp97nlaa}) to yield a good initial vector.

The combined DACG-Newton algorithm is used in the approximation of $p \in [10, 20]$ eigenpairs of a number
of matrices arising from various realistic applications of size up to $10^6$ and number of nonzeros up to $4 \times 10^7$.
Numerical results show that, in the solution of the correction equation, 
the PCG method preconditioned by BFGS displays much faster convergence than the same 
method when the preconditioner is kept fixed during the Newton process, in every test case.
Moreover, the proposed approach is shown to be competitive with the Jacobi-Davidson method.

The remaining of the paper proceeds as follows: in Section 2 we introduce the preconditioner; Section 3 is devoted
to the proof of the main theorem which states the closeness
of the preconditioned matrix to the identity matrix. In Section 4 we describe implementation details while Section
5 reports some numerical results of the proposed method for the eigensolution of the test matrices.
Section 6 reports comparisons against the Jacobi-Davidson method and Section 7 draws the conclusions.
%\newpage
%\input{intro}
%\section{Find a well-conditioned Jacobian matrix}
%\subsection{The Constrained Newton Recurrence}
%We consider now a Constrained Newton Recurrence for eigenvalue computation described in \cite{Wusaad}. Given a symmetric matrix $A$, the eigenvectors of $A$ are the solutions to the following nonlinear equation with a norm constraint,
%\begin{equation}
%\label{eq1}
% x - x x\t A x = 0, \qquad ||x|| = 1.
%\end{equation}
%The Jacobian matrix can be written as
%\begin{equation}
%\label{eq2}
%J_C \equiv A -  x\t A x I - 2 x x\t A.
%\end{equation}
%Given an initial guess $x_0$, the following constrained Newton recurrence can be used to solve equation \ref{eq1},
%
%\begin{equation}
%\label{eq3}
%J_C  z = r_i, \qquad \qquad (r_i= A x_i - \lambda_i x_i, \qquad  \lambda_i = x_i\t A x_i),
%\end{equation}
%\begin{equation}
%\label{eq4}
%x_{i+1} = \dfrac{x_i - z }{||x_i -z||},  \qquad (\lambda_{i+1} = x_{i+1}\t A x_{i+1}).
%\end{equation}
%Since $J_k$ is a polynomial of $x$, $J_C$ is nonsingular when $x$ is sufficiently close to an eigenvector. 
%The above Newton recurrence should converge
%quadratically near a nonzero simple eigenvalue (see \cite{Wusaad}). 

%\newpage
\section{BFGS update of an initial preconditioner}
Following the idea described in \cite{bbm08sisc}, we propose a sequence of preconditioners for the Newton systems
using the BFGS rank-two update.
To precondition the initial Newton system 
$ J_0 \fs_0 = -\fr$,  where
\[ J_0 = (I - \fu_0\fu_0\t) (A - \theta_0 I)  (I - \fu_0\fu_0\t), \quad \fr =  -(A\fu_0 - \theta_0 \fu_0), 
\qquad \theta_0 = \fu_0\t A \fu_0\]
we chose to use a projected  incomplete Cholesky preconditioner with partial fill-in~\cite{saa91}: 

\noindent
$P_0 = (I - \fu_0\fu_0\t) \widehat{P}_0 (I - \fu_0\fu_0\t)$ \quad with \quad
$\widehat{P}_0 = \left(L L\t\right)^{-1}$ being $L =   IC({\lf}, \tau_{IC}, A)$ an incomplete triangular Cholesky factor of $A$,
with parameters $\lf$, maximum fill-in of a row in $L$, and $\tau_{IC}$ the threshold for dropping small elements in the factorization.
Then a sequence  of projected preconditioners for the subsequent linear systems $J_k \fs_k = -\fr_k$ may be  defined by using the BFGS formula as: 
	\begin{eqnarray} 
		P_{k+1} &=& (I - \fu_{k+1}\fu_{k+1}\t) \widehat{P}_{k+1}(I - \fu_{k+1}\fu_{k+1}\t), \qquad \text{where}
		    \nonumber \\
		     \wP &=& 
		\frac{\fs \fs\t}{\fs\t \fy}   +
		\left(I - \frac{\fs \fy\t}{\fs\t \fy}\right) \widehat{P}_k
		\left(I - \frac{\fy \fs\t}{\fs\t \fy}\right) 
		%\frac{\fs \fs\t}{\fs\t \fy}  
	\end{eqnarray}
	and $\fs \equiv \fs_k$ is the solution of the $k$-th Newton system whereas $\fy \equiv \fy_k = \fr_{k+1} - \fr_k$.

	We propose here a simplification of the preconditioner update formula based on the well-known cubic convergence of the Newton method which implies that
	$ \|\fe_{k+1}\| \ll \|\fe_k\| $,
	in  a suitable neighborhood of the solution (i.e. for a suitable $\delta$ s.t. $\|\fe_k \| < \delta$).
	As a consequence also the residual norm satisfies $\|\fr_{k+1}\|  \ll \|\fr_k\| $. We can then approximate
	$\fy_k$ with $-\fr_k$ and write the preconditioner at level $k+1$ as (with $\fr \equiv \fr_k$):
	\begin{equation}
		\wP = 
		-\frac{\fs \fs\t}{\fs\t \fr}   +
		\left(I - \frac{\fs \fr\t}{\fs\t \fr}\right) \widehat{P}_k
		\left(I - \frac{\fr \fs\t}{\fs\t \fr}\right)  
%		 =  \frac{\fs \fs\t}{\fs\t J_k \fs}   +
%		 \left(I - \frac{\fs \fs\t J_k}{\fs\t J_k \fs}\right) \widehat{P}_k
%		\left(I - \frac{J_k \fs \fs\t}{\fs\t J_k \fs}\right)
	\label{lastprec}
	\end{equation}
%
%	where we used the relation $\fr = -J_k \fs$. 
	Theorem \ref{spd} of next Section will prove that the
	preconditioner defined in (\ref{lastprec}) is SPD if $\ww_k$ is so. %In fact,
%the condition of $P_{k+1}$ being SPD is $0 < -\fs\t \fr  = \fs\t J_k \fs$ which is always verified  since $J_k$ is SPD  
%in the subspace orthogonal to $\fu_k$ and $\fs$ is in its turn orthogonal to $\fu_k$.

\section{Theoretical analysis of the  preconditioner}
\subsection{Finding the smallest eigenpair}
The idea of the BFGS preconditioner is taken from the general analysis  in \cite{bbm08sisc,bbmmp11} where a sequence of preconditioners
 is devised in order to precondition the sequence of Newton systems for a general nonlinear problem.
One of the ``Standard Assumptions'' made in these papers was the nonsingularity of the Jacobian in the solution
of the nonlinear system.  
Here the situation is different, the Jacobian in the correction equation
$J(\fu) = (I - \fu\fu\t) (A - q(\fu)I) (I - \fu \fu\t)$
is singular whatever $\fu$, in particular it is singular when $\fu $ is equal to the exact eigenvector.
The theoretical analysis of the goodness of the preconditioner will be therefore completely different,
though obtaining similar results, than that proposed in~\cite{bbm08sisc,bbmmp11}.

We start by recalling some known results about convergence of the Newton  method for eigenproblems.
At every step of our Newton  method we approximately solve
\[ (I - \fu_k \fu_k\t) (A - \theta_k I) (I - \fu_k \fu_k\t) \fs  = - (A \fu_k - \theta_k \fu_k)  \]
where $\theta_k = \fu_k\t A \fu_k $, in the space orthogonal to $\fu_k$. Then we set
\begin{equation}
\label{beta}
 \fu_{k+1} = \frac{\fu_k + \fs}{\|\fu_k + \fs\|} =  \frac{\fu_k + \fs}{\sqrt{1 + \|\fs\|^2}} = \frac{\fu_k + \fs}{\beta}, 
\end{equation}
in view of $\fu_k\t \fs = 0$ and $\|\fu_k\| = 1$, 
where we have defined $\beta = \sqrt{1 + \|\fs\|^2}$.

The above mentioned Newton iteration is shown to converge cubically if the correction equation is solved exactly.
Since this is not the case when it is iteratively solved, we simply assume convergence, namely that for a suitable
$\delta > 0$ such that $\|\fe_0\| < \delta$ there is a constant $r < 1$ such that
\begin{equation}
\label{ekp1}
 \|\fe_{k+1}\| < r \|\fe_k\|, \qquad k = 0, \ldots
\end{equation}

\noindent
{\bf Notation}.  \\[-.3em]

\noindent
In the sequel we will indicate as $\fv_1$ the exact eigenvector corresponding to the smallest exact eigenvalue $\lambda_1$.
The error vector at step $k$ is denoted by $\fe_k = \fu_k - \fv_1$, while the error in the eigenvalue approximation is
$\varepsilon_k = \theta_k - \lambda_1 ( >0)$. It is easily proved that there is a constant $M$ independent of $k$ such that
\begin{equation}
	\label{varep}
 \varepsilon_k \le M \|\fe_k\|^2. 
\end{equation}
With $\rho(A)$ we mean the largest modulus eigenvalue of $A$ while $\lambda(A)$ refers to a generic eigenvalue of matrix $A$.
As the matrix norm of a symmetric matrix $A$ we will use the Euclidean norm 
$\|A\| \equiv \|A\|_2 = \rho(A) = \displaystyle \sup_{\fx \in \R^n, \fx \ne 0} \dfrac{\fx\t A \fx}{\fx\t \fx} $. 
\begin{Remark}
	\label{rem1}
At first  sight the Jacobian matrix in the correction equation is singular, but this does not matter since the PCG
algorithm is run within the subspace of vectors orthogonal to $\fu_k$ (in fact also $\fr\t \fu_k = 0)$. Thus, notion
of positive definiteness, eigenvalue distribution, condition number, norms, etc, apply as usual  but with respect
to matrices restricted to this subspace.
\end{Remark}

\noindent
The following Lemma will bound the extremal eigenvalues of $J_k$ in the subspace orthogonal to $\fu_k$. 
\begin{lemma}
	\label{lemmaNOTAY}
	There is a positive number $\delta$ such that if $\|\fe_k\| < \delta$ then
	\[ J_k = (I - \fu_k \fu_k\t) (A - \theta_k I) (I - \fu_k \fu_k\t) \]
	is SPD in the subspace orthogonal to $\fu_k$. Moreover the following bounds hold:
	\[ \frac{\lambda_2 - \lambda_1}{2} < \fz\t J_k \fz  < \lambda_n \]
	for every unit norm vector $\fz$ orthogonal to $\fu_k$.
\end{lemma}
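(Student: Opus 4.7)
The plan is as follows. Since $\fz$ is orthogonal to $\fu_k$, the projectors $(I-\fu_k\fu_k\t)$ act trivially on $\fz$, so the quadratic form reduces to
\[ \fz\t J_k \fz = \fz\t (A - \theta_k I) \fz = \fz\t A \fz - \theta_k. \]
So the task is to bound $\fz\t A \fz - \theta_k$ from above by $\lambda_n$ and from below by $(\lambda_2-\lambda_1)/2$, assuming $\|\fe_k\|$ is sufficiently small.

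First I will expand $\fz$ in the orthonormal eigenbasis $\{\fv_i\}$ of $A$: $\fz = \sum_{i=1}^n \alpha_i \fv_i$ with $\sum_i \alpha_i^2 = 1$. The key observation is that the orthogonality $\fu_k\t \fz = 0$, combined with $\fu_k = \fv_1 + \fe_k$, gives
\[ \alpha_1 = \fv_1\t \fz = -\fe_k\t \fz, \qquad \text{hence } \alpha_1^2 \le \|\fe_k\|^2. \]
From this, $\fz\t A \fz = \lambda_1 \alpha_1^2 + \sum_{i\ge 2} \lambda_i \alpha_i^2 \ge \lambda_1 \alpha_1^2 + \lambda_2(1 - \alpha_1^2) = \lambda_2 - (\lambda_2-\lambda_1)\alpha_1^2 \ge \lambda_2 - (\lambda_2-\lambda_1)\|\fe_k\|^2$.

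Next I combine this with the bound on the Rayleigh quotient error: inequality (\ref{varep}) gives $\theta_k \le \lambda_1 + M\|\fe_k\|^2$. Therefore
\[ \fz\t J_k \fz \ge (\lambda_2-\lambda_1) - \bigl(\lambda_2-\lambda_1 + M\bigr)\|\fe_k\|^2. \]
Choosing $\delta^2 < (\lambda_2-\lambda_1)/\bigl[2(\lambda_2-\lambda_1+M)\bigr]$ yields the stated lower bound, which in particular establishes that $J_k$ is SPD on the orthogonal complement of $\fu_k$ (cf.\ Remark~\ref{rem1}).

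For the upper bound, since $A$ is SPD we have $\theta_k \ge \lambda_1 > 0$, and $\fz\t A \fz \le \lambda_n$, so $\fz\t J_k \fz \le \lambda_n - \lambda_1 < \lambda_n$. The only subtlety is in the lower bound, where one must carefully use both the closeness of $\fu_k$ to $\fv_1$ (through $\alpha_1^2 \le \|\fe_k\|^2$) and the quadratic convergence estimate for the Rayleigh quotient; once these two ingredients are in hand, the argument reduces to choosing $\delta$ explicitly so as to absorb all $O(\|\fe_k\|^2)$ terms into half of the spectral gap.
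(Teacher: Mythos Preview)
Your proof is correct. The paper's own proof is shorter only because it outsources the main step to Lemma~3.1 of \cite{notay}, which states directly that $\min_{\fz\perp\fu_k,\|\fz\|=1}\fz\t J_k\fz\ge\lambda_1+\lambda_2-2\theta_k$; from there the paper just substitutes $\theta_k-\lambda_1=\varepsilon_k\le M\|\fe_k\|^2$ and chooses $\delta<\sqrt{(\lambda_2-\lambda_1)/(4M)}$. Your argument instead \emph{proves} the analogous bound from scratch: expanding $\fz$ in the eigenbasis and exploiting $\fu_k=\fv_1+\fe_k$ together with $\fu_k\t\fz=0$ to control $\alpha_1^2\le\|\fe_k\|^2$ is exactly the mechanism behind Notay's lemma, so in effect you have supplied a self-contained derivation of the cited result. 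The resulting thresholds differ cosmetically (the paper needs $2M\delta^2<(\lambda_2-\lambda_1)/2$, you need $(\lambda_2-\lambda_1+M)\delta^2<(\lambda_2-\lambda_1)/2$), but both yield the same qualitative statement. Your treatment of the upper bound, using $\theta_k\ge\lambda_1>0$ to obtain the strict inequality $\fz\t J_k\fz\le\lambda_n-\lambda_1<\lambda_n$, is likewise what the paper dismisses as ``straightforward.''
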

\begin{proof}
	From Lemma 3.1 in \cite{notay} and the definition of $\varepsilon_k$, 
	we have that  $\displaystyle \min_{\fz \perp \fu, \|\fz\| = 1} \fz\t J_k \fz \ge \lambda_1 + \lambda_2 - 2 \theta_k =  \lambda_2 - \lambda_1 - 2 \varepsilon_k   $.
	Now using (\ref{varep}),
	\[\lambda_2 - \lambda_1 - 2 \varepsilon_k \ge \lambda_2 - \lambda_1 - 2 M \|\fe_k\|^2 
\ge \lambda_2 - \lambda_1 - 2 M \delta^2 > \frac{\lambda_2 - \lambda_1}{2} > 0\quad \left(\text{if  } \ \delta < 
\sqrt {\dfrac{\lambda_2 - \lambda_1}{4 M} }
\right),\]
showing that $J_k$ is SPD. The upper bound for the eigenvalues of $J_k$ is straightforward. 
\end{proof}

\noindent
The previous Lemma allows us to prove that the preconditioner defined in (\ref{lastprec}) is SPD, as stated in the following
Theorem.

\begin{theorem}
	\label{spd}
	If the correction equation is solved exactly, then any matrix $\widehat{P}_k$ defined 
	by (\ref{lastprec}) is SPD and hence $P_k$ is SPD in the subspace orthogonal to $\fu_k$.
\end{theorem}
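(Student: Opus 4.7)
The key observation is that formula (\ref{lastprec}), obtained from the standard BFGS update by substituting $\fy = -\fr$, is positive-definiteness-preserving exactly under the curvature-type condition $\fs\t \fr < 0$ (the analogue of the usual BFGS requirement $\fs\t\fy > 0$). So my plan is: (i) verify this curvature condition, and (ii) replay the textbook argument that the rank-two update then preserves SPD. Since the statement is an implication ``if $\widehat{P}_k$ is SPD then $\widehat{P}_{k+1}$ is SPD'', I would argue by induction on $k$, with $\widehat{P}_0 = (LL\t)^{-1}$ supplying the SPD base case.

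For step (i), I would use Remark \ref{rem1} and Lemma \ref{lemmaNOTAY}: since the correction equation is solved exactly, $\fs$ is orthogonal to $\fu_k$ and $J_k \fs = -\fr$, so pairing with $\fs$ gives $\fs\t \fr = -\fs\t J_k \fs < 0$ because Lemma \ref{lemmaNOTAY} guarantees that $J_k$ is SPD on the subspace $\fu_k^\perp$ (and we may assume $\fs\neq 0$; otherwise we have already converged).

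For step (ii), I would compute the quadratic form directly: for arbitrary $\fx$, setting $\fw = \left(I - \dfrac{\fr\fs\t}{\fs\t\fr}\right)\fx = \fx - \dfrac{\fs\t\fx}{\fs\t\fr}\fr$, a short manipulation gives
\[
\fx\t \widehat{P}_{k+1}\fx \;=\; -\frac{(\fs\t\fx)^2}{\fs\t\fr} \;+\; \fw\t \widehat{P}_k \fw.
\]
The first term is nonnegative because $\fs\t\fr<0$ by step (i), and the second is nonnegative because $\widehat{P}_k$ is SPD by induction. Strict positivity for $\fx\neq 0$ follows from the usual case split: if $\fs\t\fx\neq 0$ the first term is strictly positive, while if $\fs\t\fx=0$ then $\fw=\fx\neq 0$ and the second term is strictly positive.

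Finally, for $P_{k+1}$: any nonzero $\fz\in\fu_{k+1}^\perp$ satisfies $(I-\fu_{k+1}\fu_{k+1}\t)\fz=\fz$, so $\fz\t P_{k+1}\fz = \fz\t \widehat{P}_{k+1}\fz > 0$ by what was just shown, which is the SPD conclusion restricted to $\fu_{k+1}^\perp$ in the sense of Remark \ref{rem1}. The main obstacle is really just step (i): ensuring the BFGS curvature condition with the substitution $\fy\leftarrow -\fr$, which in turn rests on the exact-solve assumption and on the fact that Lemma \ref{lemmaNOTAY} applies within the orthogonal subspace where $\fs$ and $\fr$ live. Step (ii) is then purely algebraic and mirrors the classical BFGS SPD-preservation proof.
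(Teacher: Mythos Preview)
Your proposal is correct and follows essentially the same approach as the paper. The paper substitutes $\fr = -J_k\fs$ to rewrite (\ref{lastprec}) in terms of $J_k$ before carrying out the quadratic-form computation, but the logic is identical: both arguments hinge on $\fs\t J_k\fs>0$ (equivalently your $\fs\t\fr<0$) from Lemma~\ref{lemmaNOTAY}, perform the same case split on whether $\fs\t\fx$ vanishes, and finish with the same projection step for $P_{k+1}$.
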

\begin{proof}
	The proof is carried out by induction. $\widehat{P}_0$ is SPD being an incomplete Cholesky factorization of the SPD
	matrix $A$, then
	from $J_k \fs = -\fr$, we can write
	\begin{equation}
		\wP = 
		-\frac{\fs \fs\t}{\fs\t \fr}   +
		\left(I - \frac{\fs \fr\t}{\fs\t \fr}\right) \widehat{P}_k
		\left(I - \frac{\fr \fs\t}{\fs\t \fr}\right)  
		 =  \frac{\fs \fs\t}{\fs\t J_k \fs}   +
		 \left(I - \frac{\fs \fs\t J_k}{\fs\t J_k \fs}\right) \widehat{P}_k
		\left(I - \frac{J_k \fs \fs\t}{\fs\t J_k \fs}\right)
	\end{equation}
	We define $F =  \left(I - \dfrac{J_k \fs \fs\t}{\fs\t J_k \fs}\right)$ and note that,
	by previous Lemma, $\alpha = \fs\t J_k \fs > 0$ since $\fs\t\fu_k = 0$. 
Let now $\ww_k$ be SPD by induction hypothesis, then
	for every $\fz \ne 0$,
	\[ 
	\fz\t \wP \fz = \frac{(\fz\t \fs)^2}{\alpha} + \fz\t F\t \widehat{P}_k F \fz =
	\begin{cases}   \fz\t \ww_k \fz > 0 &  (\text{if} \ \fz\t \fs  =  0) \\
				      \dfrac{(\fz\t \fs)^2}{\alpha} + (F  \fz)\t 
	\widehat{P}_k (F \fz)  \ge \dfrac{(\fz\t \fs)^2}{\alpha} > 0, & (\text{if} \ \fz\t \fs  \ne  0) \\
\end{cases} \]
which proves that $\wP$ is SPD.  If we now take $\fz \perp \fu_{k+1}$, we have 
\[\fz\t P_{k+1} \fz = 
\fz\t(I - \fu_{k+1} \fu_{k+1}\t)  \wP (I - \fu_{k+1} \fu_{k+1}\t)\fz = \fz\t \wP \fz > 0, \] which  completes the proof.
\end{proof}

\noindent
Let us define the difference between the preconditioned Jacobian and the identity matrix as \[E_{k} = I - \Jk P_{k} \Jk.  \]
Since by definition we have $J_k  \fu_k = 0$ then $\fu_k$ is the eigenvector of $J_k$ corresponding to the zero eigenvalue.
Hence, since also $J_k^{1/2} \fu_k = 0$ the error $E_k$ can also be defined as
\[ E_{k} = I - \Jk P_{k} \Jk  = I - \Jk (I - \fu_k\fu_k\t) \ww_k (I - \fu_k\fu_k\t) \Jk = I - \Jk \ww_k \Jk. \]
The following technical lemma will bound the norm of $\ww_{k}$ in terms of that of $E_{k}$.
Being $\ww_k$ SPD we can define its norm in the space orthogonal to $\fu_k$ as
\[ \|\ww_k\| = \sup_{\fw \perp \fu_k, \fw \ne 0} \frac{\fw\t \ww_k \fw}{\fw\t \fw}.\]
\begin{lemma}
\label{lemma1}
	There is a positive number $\delta$ such that if $\|\fe_k\| < \delta$ then
\[ \|\ww_k \| \le \frac{2}{\lambda_2 - \lambda_1} \left(1 + \|E_{k}\|\right).\]
\end{lemma}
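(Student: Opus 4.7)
The plan is to exploit the identity $J_k^{1/2} \widehat{P}_k J_k^{1/2} = I - E_k$ (which holds on the subspace orthogonal to $\fu_k$ because $J_k^{1/2} \fu_k = 0$) together with Lemma \ref{lemmaNOTAY}, which gives us both the positive definiteness of $J_k$ on $\fu_k^{\perp}$ and the quantitative lower bound $\lambda_{\min}(J_k|_{\fu_k^{\perp}}) > (\lambda_2-\lambda_1)/2$.

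First I would restrict attention to $\fw \in \fu_k^{\perp}$, $\fw \ne \mathbf{0}$, since by definition $\|\ww_k\|$ is taken as a supremum over such vectors. Lemma \ref{lemmaNOTAY} (provided $\|\fe_k\|<\delta$) guarantees that $J_k^{1/2}$, restricted to $\fu_k^{\perp}$, is a bijection of that subspace onto itself. Hence there exists a unique $\fz \in \fu_k^{\perp}$ such that $\fw = J_k^{1/2} \fz$, and the same lemma yields the norm bound
\[ \|\fz\|^2 = \|J_k^{-1/2} \fw\|^2 \;\le\; \frac{2}{\lambda_2 - \lambda_1}\,\|\fw\|^2. \]

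Next I would substitute this change of variables into the Rayleigh quotient of $\ww_k$ and use the definition of $E_k$:
\[ \fw\t \ww_k \fw \;=\; \fz\t J_k^{1/2} \ww_k J_k^{1/2} \fz \;=\; \fz\t (I - E_k) \fz \;\le\; (1+\|E_k\|)\,\|\fz\|^2, \]
where the last inequality is just $|\fz\t E_k \fz| \le \|E_k\|\,\|\fz\|^2$. Combining with the bound on $\|\fz\|^2$ and dividing by $\|\fw\|^2$ yields the claim after taking the supremum over $\fw \in \fu_k^{\perp}\setminus\{\mathbf{0}\}$.

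The only delicate point is the justification that $J_k^{1/2}$ maps $\fu_k^{\perp}$ onto itself bijectively, so that the substitution $\fw = J_k^{1/2}\fz$ actually covers every test vector in the supremum defining $\|\ww_k\|$; this follows from Lemma \ref{lemmaNOTAY} and the observation (already used in the paper just before Lemma \ref{lemma1}) that $\fu_k$ is the zero eigenvector of $J_k$, so the symmetric square root acts invertibly on the complementary invariant subspace. Everything else is a one-line spectral estimate, so I do not expect substantial obstacles beyond carefully recording this reduction.
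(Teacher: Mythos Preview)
Your proposal is correct and follows essentially the same approach as the paper: both arguments use the substitution $\fw = J_k^{1/2}\fz$ (justified via Lemma~\ref{lemmaNOTAY} and the fact that $\fu_k$ is the null eigenvector of $J_k$), rewrite $\fw\t\ww_k\fw$ as $\fz\t(I-E_k)\fz$, and then apply the lower eigenvalue bound from Lemma~\ref{lemmaNOTAY} together with $|\fz\t E_k\fz|\le\|E_k\|\,\|\fz\|^2$. The only cosmetic difference is that the paper keeps the Rayleigh quotient $\dfrac{\fz\t\fz}{\fz\t J_k\fz}$ explicit before bounding it, whereas you first isolate the equivalent bound $\|\fz\|^2\le \dfrac{2}{\lambda_2-\lambda_1}\|\fw\|^2$; the content is identical.
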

\begin{proof}
	Let $\fw \in \R^n$,  $\fw\t \fu_k = 0$, $\fw \ne 0$. Then since
$J_k$ (and thus $\Jk$) is SPD in the subspace orthogonal to $\fu_k$ the linear system
	$\Jk \fz = \fw$ has a unique solution $\fz$ such that $\fz \t \fu_k = 0$. Therefore
\begin{eqnarray*}
0 &<&  \frac{\fw\t\ww_k \fw }{\fw\t \fw  } = 
		\frac{\fz\t J^{1/2} \ww_k J^{1/2} \fz}{\fz\t J_k \fz} = 
		\frac{\fz\t (I - E_k) \fz}{\fz\t J_k \fz}  = \frac{\fz\t \fz}{\fz\t J_k \fz}\left(1 -  \frac{\fz\t E_k \fz}{\fz\t\fz}
		\right) \le \ \text{(by Lemma \ref{lemmaNOTAY}}) \\
 & \le &  \frac{2}{\lambda_2 - \lambda_1} \left(1 -  \frac{\fz\t E_k \fz}{\fz\t\fz} \right)
                                                         \le  \frac{2}{\lambda_2 - \lambda_1} \left(1 +  \left|\frac{\fz\t E_k \fz}{\fz\t\fz}\right| \right)
                                                         \le  \frac{2}{\lambda_2 - \lambda_1} \left(1 +  \|E_k\| \right).
\end{eqnarray*}
\end{proof}
\vspace{4mm}

\noindent
The next lemma will relate the norms of the difference $\fs$ and of the norm of the error vector $\fe_k$:
\begin{lemma}
	\label{lemma3}
	There exists a positive number $\delta$ s.t. if $\|\fe_k\| < \delta$ then
	\[  \|\fs\| \le 3 \|\fe_k\| \]
\end{lemma}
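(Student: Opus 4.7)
The proof plan hinges on the orthogonality $\fu_k\t \fs = 0$ that is built into the correction equation. This identity, combined with the update $\fu_{k+1} = (\fu_k + \fs)/\beta$ and $\|\fu_k\|=1$, immediately gives $\fu_{k+1}\t\fu_k = 1/\beta$ and an orthogonal (Pythagorean) decomposition
\[ \fu_{k+1} - \fu_k = \left(\frac{1}{\beta} - 1\right)\fu_k + \frac{1}{\beta}\fs, \qquad \|\fu_{k+1} - \fu_k\|^2 = \left(1-\frac{1}{\beta}\right)^2 + \frac{\|\fs\|^2}{\beta^2}. \]
Discarding the non-negative first term on the right yields the key estimate $\|\fs\| \le \beta\,\|\fu_{k+1} - \fu_k\|$, reducing the task to bounding the step length $\|\fu_{k+1}-\fu_k\|$ by a multiple of $\|\fe_k\|$ and the normalization factor $\beta$ by a constant close to $1$.

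The first factor is immediate from the triangle inequality: $\fu_{k+1} - \fu_k = \fe_{k+1} - \fe_k$, so the convergence assumption (\ref{ekp1}) yields $\|\fu_{k+1} - \fu_k\| \le \|\fe_{k+1}\| + \|\fe_k\| \le (1+r)\,\|\fe_k\|$. For the factor $\beta$, I would expand $\|\fu_{k+1} - \fu_k\|^2$ using $\fu_{k+1}\t\fu_k = 1/\beta$ and $\|\fu_{k+1}\|=\|\fu_k\|=1$ to obtain $\|\fu_{k+1} - \fu_k\|^2 = 2(1 - 1/\beta)$, and then invert to get
\[ \frac{1}{\beta} = 1 - \frac{\|\fu_{k+1} - \fu_k\|^2}{2} \ge 1 - \frac{(1+r)^2\|\fe_k\|^2}{2}. \]
Combining the two estimates gives $\|\fs\| \le (1+r)\|\fe_k\|\,/\,(1 - (1+r)^2\|\fe_k\|^2/2)$, provided $\delta$ is small enough that the denominator is positive.

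To conclude, I would pick $\delta$ small enough that the denominator is at least $(1+r)/3$, which is possible precisely because $r < 1$: explicitly, any $\delta^2 \le 2(2-r)/(3(1+r)^2)$ works, and then $\|\fs\| \le 3\|\fe_k\|$. The only (mild) obstacle is cosmetic, namely matching the factor $3$ in the statement: the same argument actually delivers any target constant strictly greater than $1+r$, at the price of shrinking $\delta$. No use of the preconditioner or of the earlier lemmas is needed; the bound is a purely geometric consequence of normalizing $\fu_k + \fs$ in the presence of the orthogonality constraint $\fs \perp \fu_k$.
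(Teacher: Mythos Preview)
Your proof is correct and takes a genuinely different route from the paper's. The paper writes $\fs = \beta\fe_{k+1} + (\beta-1)\fv_1 - \fe_k$ and, after bounding $\beta=\sqrt{1+\|\fs\|^2}$ by $1+\|\fs\|$ and $\beta-1$ by $\|\fs\|^2/2$, obtains the self-referential inequality $\|\fs\|\le (2+\|\fs\|)\|\fe_k\| + \tfrac12\|\fs\|^2$, which is then solved as a quadratic in $\|\fs\|$; the explicit threshold $\delta<2/15$ falls out of that algebra. You instead exploit the Pythagorean identity $\|\fu_{k+1}-\fu_k\|^2 = 2(1-1/\beta)$ (both vectors unit, inner product $1/\beta$) to bound $\beta$ directly in terms of $\|\fe_k\|$, so no self-referential inequality ever arises. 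Your argument is shorter and more transparent, and it makes visible the sharper fact that any constant exceeding $1+r<2$ is attainable, which the paper's computation obscures; the paper's approach has the minor virtue of being the more generic elimination technique when a clean closed form for $\beta$ in independent quantities is not at hand.
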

\begin{proof}
From (\ref{beta}) we have
	\begin{equation} 
		\fs = \beta \fu_{k+1} - \fu_k =
		\beta \fu_{k+1} - \beta \fv_1 + \beta \fv_1 -\fv_1 + \fv_1 - \fu_k =
		\beta \fe_{k+1} + (\beta-1) \fv_1 - \fe_k .
	\end{equation} 
	Hence, taking norms and using (\ref{ekp1}), 
	\begin{eqnarray}
		\|\fs\| &\le & \sqrt{1+\|\fs\|^2} \|\fe_{k+1} \|  + \sqrt{1 + \|\fs\|^2}-1 + \|\fe_k\|  \nonumber \\
			& \le & (1+\|\fs\|) \|\fe_{k+1} \|  + \frac{\|\fs\|^2}{2} + \|\fe_k\|  \nonumber \\
			& \le & (2+\|\fs\|) \|\fe_{k} \|  + \frac{\|\fs\|^2}{2}  .
		\label{eq15}
	\end{eqnarray}
The last 2nd order inequality, when solved for $\|\fs\|$ gives
\begin{equation}
\label{fs}
 \|\fs\| \le 1 - \|\fe_k\| - \sqrt{1 - 6 \|\fe_k\| + \|\fe_k\|^2}.
\end{equation}
Choosing $\delta < \dfrac{2}{15}$ implies $\sqrt{1 - 6 \|\fe_k\| + \|\fe_k\|^2}  \ge 1 - 4 \|\fe_k\|$, which, 
combined with (\ref{fs}) provides the desired result.
\end{proof}
\vspace{4mm}

\noindent
Now we need to prove that the distance between two consecutive Jacobians is bounded by a constant times the error vector:
\begin{lemma}
\label{lemma4}
	There exists a positive number $\delta$ s.t. 
	if $\|\fe_k\| < \delta$ then 
	\[ \| J_{k+1} - J_k \| \le  c_1 \|\fe_k\| \]
	for a suitable constant $c_1$.
\end{lemma}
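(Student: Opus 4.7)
The plan is to split $J_{k+1} - J_k$ into three telescoping pieces and bound each separately. Writing $\Pi_k = I - \fu_k\fu_k\t$ for the orthogonal projector (so that $J_k = \Pi_k(A - \theta_k I)\Pi_k$), I would use the decomposition
\begin{equation*}
J_{k+1} - J_k = (\Pi_{k+1}-\Pi_k)(A-\theta_{k+1}I)\Pi_{k+1} + \Pi_k(\theta_k-\theta_{k+1})\Pi_{k+1} + \Pi_k(A-\theta_k I)(\Pi_{k+1}-\Pi_k),
\end{equation*}
and then apply the triangle inequality. The factors $\|\Pi_j\| \le 1$ and $\|A - \theta_j I\| \le \rho(A) + |\theta_j| \le 2\lambda_n$ are harmless constants, so everything reduces to bounding $\|\Pi_{k+1} - \Pi_k\|$ and $|\theta_{k+1} - \theta_k|$.

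For the projector difference, I would write $\Pi_{k+1} - \Pi_k = \fu_k \fu_k\t - \fu_{k+1}\fu_{k+1}\t = -(\fu_{k+1}-\fu_k)\fu_{k+1}\t - \fu_k(\fu_{k+1}-\fu_k)\t$ and conclude $\|\Pi_{k+1} - \Pi_k\| \le 2\|\fu_{k+1}-\fu_k\|$. From (\ref{beta}) and $\fu_k\t\fs = 0$ one has $\fu_{k+1} - \fu_k = \beta^{-1}\bigl(\fs - (\beta-1)\fu_k\bigr)$ with $\beta - 1 = \sqrt{1+\|\fs\|^2}-1 \le \tfrac{1}{2}\|\fs\|^2$, so that $\|\fu_{k+1}-\fu_k\| \le \|\fs\| + \tfrac{1}{2}\|\fs\|^2$. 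Lemma \ref{lemma3} then gives $\|\fu_{k+1} - \fu_k\| \le 3\|\fe_k\| + \tfrac{9}{2}\|\fe_k\|^2$, which for $\|\fe_k\| < \delta$ is bounded by a constant multiple of $\|\fe_k\|$.

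For the Rayleigh quotient difference, since $\theta_j - \lambda_1 = \varepsilon_j > 0$ and $\varepsilon_j \le M\|\fe_j\|^2$ by (\ref{varep}), the convergence assumption (\ref{ekp1}) yields
\begin{equation*}
|\theta_{k+1} - \theta_k| \le \varepsilon_{k+1} + \varepsilon_k \le M\bigl(\|\fe_{k+1}\|^2 + \|\fe_k\|^2\bigr) \le M(1+r^2)\|\fe_k\|^2,
\end{equation*}
which is in fact $O(\|\fe_k\|^2)$ and therefore absorbed into the $O(\|\fe_k\|)$ contribution of the outer two terms.

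Combining the three bounds gives $\|J_{k+1} - J_k\| \le c_1 \|\fe_k\|$ for a suitable constant $c_1$ depending on $\lambda_n$, $M$, $r$, and $\delta$. I do not expect any real obstacle here beyond careful bookkeeping; the only slightly delicate point is verifying that the bound $\beta - 1 \le \tfrac{1}{2}\|\fs\|^2$ (needed so that the $(\beta-1)\fu_k$ piece does not dominate) remains valid uniformly in $k$, which follows immediately from $\|\fs\| \le 3\|\fe_k\| < 3\delta$ by choosing $\delta$ small enough.
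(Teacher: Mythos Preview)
Your proof is correct and uses the same two essential inputs as the paper (Lemma~\ref{lemma3} for $\|\fs\|$ and inequality~(\ref{varep}) for the Rayleigh-quotient increment), but organizes the algebra differently. The paper expands each $J_j$ as $(A-\theta_j I) - \fu_j\fu_j\t(A-\theta_j I) - (A-\theta_j I)\fu_j\fu_j\t$ (the quadratic cross term vanishes because $\fu_j\t(A-\theta_j I)\fu_j = 0$), subtracts, and regroups into $(\theta_k-\theta_{k+1})I + G + G\t$; it then substitutes $\fu_{k+1}\fu_{k+1}\t = \beta^{-2}\fu_k\fu_k\t + H$ with $\|H\|=O(\|\fs\|)$ to bound $\|G\|$. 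Your three-term telescope keeps the projectors intact and reduces everything to $\|\Pi_{k+1}-\Pi_k\| \le 2\|\fu_{k+1}-\fu_k\|$, which is a bit more modular and avoids tracking the auxiliary matrix $H$; the paper's route, on the other hand, exploits the specific Rayleigh-quotient structure to make one term drop out exactly. One minor remark: the inequality $\beta-1 = \sqrt{1+\|\fs\|^2}-1 \le \tfrac{1}{2}\|\fs\|^2$ holds for every $\|\fs\|\ge 0$ with no smallness assumption; the restriction on $\delta$ is only needed afterward so that the resulting $\tfrac{9}{2}\|\fe_k\|^2$ can be absorbed into the linear bound.
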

\begin{proof}
	\begin{eqnarray}
		\label{Delta}
		\Delta_k &=& J_{k+1} - J_k = \nonumber \\  & = & 
		\left(I - \fu_{k+1} \fu_{k+1}\t\right) (A - \theta_{k+1} I) \left(I - \fu_{k+1} \fu_{k+1}\t\right)-
		\left(I - \fu_k \fu_k\t\right) (A - \theta_k I) \left(I - \fu_k \fu_k\t\right)  \nonumber \\
		& = & \left(\theta_k - \theta_{k+1} \right)I
		+ \fu_k \fu_k\t (A - \theta_k I) - \fu_{k+1} \fu_{k+1}\t (A - \theta_{k+1} I)  \nonumber \\
		& & \phantom{\left(\theta_k - \theta_{k+1} \right)I} 
		+ (A - \theta_k I) \fu_k \fu_k\t - (A - \theta_{k+1} I) \fu_{k+1} \fu_{k+1}\t  = \nonumber \\
                & = & \left(\theta_k - \theta_{k+1} \right)I + G + G\t
\label{eq22}
	\end{eqnarray}
where we set $G =   \fu_k \fu_k\t (A - \theta_k I) - \fu_{k+1} \fu_{k+1}\t (A - \theta_{k+1} I) $.
The first term in (\ref{eq22}) can be bounded, using (\ref{varep}),  as 
\begin{equation}
	\label{theta}
\|  \left(\theta_k - \theta_{k+1} \right)I \| = \theta_k - \theta_{k+1} = \varepsilon_{k} - \varepsilon_{k+1}  \le
\varepsilon_k \le M \|\fe_k\|^2 \le  \|\fe_k\| \quad \left(\text{if } \ \delta < \frac{1}{M}\right) .
\end{equation}
To bound $\|G\|$ recall that $\fu_{k+1} = \dfrac{\fu_k + \fs}{\beta}$ therefore
\[\fu_{k+1}\fu_{k+1}\t = \dfrac{\fu_k + \fs}{\beta} \dfrac{\fu_k\t + \fs\t}{\beta} = \frac{1}{1 + \|\fs\|^2}\left(
\fu_k \fu_k\t + \fu_k \fs\t + \fs \fu_k\t + \fs\fs\t\right)  = \frac{1}{\beta^2} \fu_k \fu_k\t + H\]
with $\|H\| \le \|\fs\| \left(2 + \dfrac{\|\fs\|}{1 + \|\fs\|^2} \right)  \le \dfrac{5}{2} \|\fs\| \le \dfrac{15}{2} \fe_k$, 
by Lemma \ref{lemma3}, then
\begin{eqnarray}
	G & = & -\frac{\fu_k \fu_k\t}{1 + \|\fs\|^2}  (A - \theta_{k+1} I) - H (A -\theta_{k+1} I)+ \fu_k \fu_k\t  (A - \theta_{k} I)  \nonumber \\
   & = & -\frac{\fu_k \fu_k\t}{1 + \|\fs\|^2}(\theta_k -\theta_{k+1}) I  + \frac{\|\fs\|^2}{1+\|\fs\|^2} (\fu_k \fu_k\t) (A - \theta_{k} I) - H (A -\theta_{k+1} I),
\end{eqnarray}
whence, using (\ref{theta}) and again Lemma \ref{lemma3}, 
\begin{equation}
	\label{G}
\|G\| \le  \|\fe_k\| + 9 \|\fe_k\|^2 \lambda_n + \dfrac{15}{2} \|\fe_k\| \lambda_n \le \left(1+9 \delta \lambda_n + \dfrac{15}{2} \lambda_n\right) \|\fe_k\| = c_2 \|\fe_k\| .
\end{equation}
Combining (\ref{Delta}), (\ref{theta}) and (\ref{G}) we have: 
\[ \|\Delta_k \| \le \|\fe_k \| + 2 \|G\| \le (1+2 c_2) \|\fe_k\|. \]
Setting $c_1 = 1+2 c_2$ completes the proof.
\end{proof}
\vspace{4mm}

\noindent
Before stating Theorem \ref{theo} we need to prove as a last preliminary result that also the difference between the square root
of two consecutive Jacobians is bounded  in terms of the norm of the error vector:
\begin{lemma}
\label{lemma5}
	Let $S_k = \JJ - \sJ$. Then there 
	is a positive number $\delta$ s.t.
	        if $\|\fe_k\| < \delta$ then
	\[ \|S_k\| \le c_3 \sqrt{\|\fe_k\|}. \]
	for a suitable constant $c_3$.
\end{lemma}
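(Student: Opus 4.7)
The plan is to invoke a Hölder-$\tfrac{1}{2}$ perturbation bound for the matrix square root, applied to $J_k$ and $J_{k+1} = J_k + \Delta_k$, and then use Lemma~\ref{lemma4} to control $\|\Delta_k\|$ in terms of $\|\fe_k\|$.

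First I would observe that both $J_k$ and $J_{k+1}$ are symmetric positive semi-definite on all of $\R^n$: by Lemma~\ref{lemmaNOTAY} and Remark~\ref{rem1} their nonzero eigenvalues lie in $[(\lambda_2-\lambda_1)/2,\lambda_n]$, and each has a single zero eigenvalue along $\fu_k$ or $\fu_{k+1}$ respectively. Consequently the unique symmetric PSD square roots $\Jk$ and $\JJ$ are globally well defined and $S_k$ is a bona fide symmetric matrix on $\R^n$.

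Next I would apply the classical estimate
\[
\| B^{1/2} - C^{1/2} \| \;\le\; \| B - C \|^{1/2}
\]
valid for every pair of symmetric positive semi-definite matrices $B$ and $C$. This is a standard consequence of the operator monotonicity of $t\mapsto t^{1/2}$ (Löwner--Heinz theorem), or alternatively of the integral representation $B^{1/2} = \pi^{-1}\int_0^\infty \lambda^{-1/2}\bigl[I-\lambda(\lambda I + B)^{-1}\bigr]\,d\lambda$ combined with the resolvent identity. Taking $B=J_{k+1}$, $C=J_k$ and applying Lemma~\ref{lemma4} yields
\[
\|S_k\| \;=\; \|\JJ - \sJ\| \;\le\; \|J_{k+1} - J_k\|^{1/2} \;\le\; \sqrt{c_1\,\|\fe_k\|},
\]
so that choosing $c_3 = \sqrt{c_1}$ and $\delta$ at least as small as the threshold inherited from Lemma~\ref{lemma4} completes the argument.

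The main obstacle is that one cannot hope for a Lipschitz bound here: both $J_k$ and $J_{k+1}$ are singular, and their null directions $\fu_k$ and $\fu_{k+1}$ do not coincide, so the standard factorisation $B - C = (B^{1/2}+C^{1/2})(B^{1/2}-C^{1/2}) + [\,\cdot\,,\,\cdot\,]$ cannot be inverted uniformly on the whole space. The exponent $\tfrac{1}{2}$ is therefore the best one can expect, and this is exactly the loss of regularity encoded by the $\sqrt{\|\fe_k\|}$ on the right-hand side of the lemma.
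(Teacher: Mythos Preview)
Your argument is correct and yields the same constant $c_3=\sqrt{c_1}$ as the paper, but the route is genuinely different. The paper proceeds by hand: it squares the identity $S_k+\Jk=(J_k+\Delta_k)^{1/2}$ to obtain the matrix equation $S_k^2+\Jk S_k+S_k\Jk-\Delta_k=0$, then tests this against a normalized eigenvector $\fx$ of $S_k$ with eigenvalue $\eta$, producing the scalar quadratic $\eta^2+2(\fx\t\Jk\fx)\eta-\fx\t\Delta_k\fx=0$. Solving and discarding the spurious root gives $|\eta|\le\sqrt{|\fx\t\Delta_k\fx|}\le\sqrt{c_1\|\fe_k\|}$, and applying this to the eigenvalue of largest modulus finishes. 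Your approach replaces this explicit eigenvalue computation by a single appeal to the H\"older-$\tfrac12$ continuity $\|B^{1/2}-C^{1/2}\|\le\|B-C\|^{1/2}$ for PSD matrices, which indeed follows from operator monotonicity of $t\mapsto t^{1/2}$: from $B\le C+\|B-C\|I$ one gets $B^{1/2}\le (C+\|B-C\|I)^{1/2}\le C^{1/2}+\|B-C\|^{1/2}I$, and symmetrically, so the spectrum of $B^{1/2}-C^{1/2}$ lies in $[-\|B-C\|^{1/2},\|B-C\|^{1/2}]$. What you gain is brevity and a clear conceptual reason for the $\tfrac12$ exponent; what the paper's argument gains is self-containment, requiring nothing beyond the quadratic formula and avoiding any appeal to L\"owner--Heinz or integral representations that a reader of a numerical linear algebra paper may not have at hand.
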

\begin{proof}
By squaring the equation $S_k + \sJ = \sqrt{J_{k} + \Delta_k}$ we obtain 
\begin{equation}
\label{2ndS}
S_k^2 +  \sJ S_k +  S_k \sJ  - \Delta_k  = 0.
\end{equation}
Let now $\fx$ be a normalized eigenvector of the symmetric matrix $S$ such that $S \fx = \eta \fx$. Premultiplying by $\fx\t$  and postmultiplying by $\fx$ 
equation (\ref{2ndS}) yields
	\begin{equation}
\label{eig2}
\eta^2 + 2 (\fx\t \sJ \fx) \eta - \fx\t \Delta_k \fx = 0.
	\end{equation}
This quadratic equation has two solutions:
\begin{equation}
\eta _{12} = -\fx\t \sJ \fx \pm \sqrt{(\fx\t \sJ \fx)^2 +  \fx\t \Delta_k \fx}. 
\label{2ndord}
\end{equation}
The smallest solution is not an eigenvalue of $S_k$ since from the definition of $S_k$, an eigenvalue $\eta$ of
$S_k$ would satisfy
\[ \eta = \eta \fx\t \fx = \fx\t S_k \fx =\fx\t  \JJ \fx -\fx\t  \sJ \fx \ge -\fx\t  \sJ \fx. \]
Then, considering the largest solution of (\ref{2ndord}) 
\begin{eqnarray*}
	\eta &=& -\fx\t \sJ \fx + \sqrt{(\fx\t \sJ \fx)^2 +  \fx\t \Delta_k \fx}  \\
      &=&   \frac{\fx\t \Delta_k \fx}{\fx\t \sJ \fx + \sqrt{(\fx\t \sJ \fx)^2 +  \fx\t \Delta_k \fx}} \le \sqrt{ \left|\fx\t \Delta_k \fx\right|}.
\end{eqnarray*}
       Let $(\overline{\eta}, \overline{\fx})$ be the eigenpair corresponding to the  largest modulus eigenvalue of $S$. Then
       \[ \|S_k\| = |\overline{\eta}| \le  \sqrt{ \left|\overline{\fx}\t \Delta_k \overline{\fx}\right|} \le \sqrt{c_1\|\fe_k\|}  = c_3 \sqrt{\|\fe_k\|}.  \]
	\end{proof}
	\vspace{4mm}

	\noindent
	We are finally ready to prove the main results of this Section. 
	The following theorem will state the so called {\em bounded deterioration} \cite{MR1678201} of the 
preconditioner at step $k+1$ with respect to that of step $k$, namely that the distance of the preconditioned matrix from the identity matrix at step $k+1$ is less or equal 
	than that at step $k$
	plus a constant that may be small as desired, depending on the closeness of $\fu_0$ to the exact solution.
	
\begin{theorem}
	\label{theo}
	Let $\delta_0$ be such that $\|E_0\| < \delta_0$, there is a positive number $\delta$ s.t.
	        if $\|\fe_0\| < \delta$ then
	\[ \|E_{k+1}\| \le \|E_k\| + K \se \]
	for a suitable constant $K$.
\end{theorem}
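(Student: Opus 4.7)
\medskip

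\noindent
\textbf{Proof plan for Theorem \ref{theo}.}
The guiding idea is the same as in the classical bounded-deterioration analyses of quasi-Newton preconditioners: express $E_{k+1}$ in terms of $E_k$ plus a perturbation due to the mismatch between $\Jk$ and $\JJ$, and control that perturbation by Lemmas \ref{lemma4} and \ref{lemma5}.

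First, since $J_{k+1}\fu_{k+1}=0$ implies $\JJ\fu_{k+1}=0$, the outer projectors $(I-\fu_{k+1}\fu_{k+1}\t)$ in the definition of $P_{k+1}$ are absorbed, so that
\[
\JJ P_{k+1}\JJ \;=\; \JJ \,\wP\,\JJ .
\]
Writing $\JJ=\Jk+S_k$ with $\|S_k\|\le c_3\sqrt{\|\fe_k\|}$, expanding the product and using $I_{k+1}-I_k=\fu_k\fu_k\t-\fu_{k+1}\fu_{k+1}\t$ (bounded by a constant times $\|\fs\|\le 3\|\fe_k\|$, as in the proof of Lemma \ref{lemma4}), I get
\[
E_{k+1} \;=\; \bigl(I_k-\Jk\wP\Jk\bigr)\;+\;(\fu_k\fu_k\t-\fu_{k+1}\fu_{k+1}\t)\;-\;\Jk\wP S_k - S_k\wP\Jk - S_k\wP S_k .
\]

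The crucial algebraic step is to evaluate the ``main term'' $I_k-\Jk\wP\Jk$ explicitly. Using $\fr=-J_k\fs$ in (\ref{lastprec}) I rewrite
\[
\wP \;=\; \frac{\fs\fs\t}{\fs\t J_k\fs}\;+\;\left(I-\frac{\fs\fs\t J_k}{\fs\t J_k\fs}\right)\ww_k\left(I-\frac{J_k\fs\fs\t}{\fs\t J_k\fs}\right),
\]
and introduce $\fp=\Jk\fs$ and the rank-one orthogonal projector $Q=\fp\fp\t/\|\fp\|^2$ (in the subspace orthogonal to $\fu_k$, where $\Jk$ is invertible by Lemma \ref{lemmaNOTAY}). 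A direct calculation gives $\Jk\bigl(I-\fs\fs\t J_k/(\fs\t J_k\fs)\bigr)=(I_k-Q)\Jk$, hence
\[
\Jk\wP\Jk \;=\; Q+(I_k-Q)(I_k-E_k)(I_k-Q),
\]
and therefore $I_k-\Jk\wP\Jk=(I_k-Q)E_k(I_k-Q)$, whose norm is bounded by $\|E_k\|$.

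The last piece is a uniform bound on $\|\wP\|$. From the identity above one has $\wP=J_k^{-1/2}\bigl[I_k-(I_k-Q)E_k(I_k-Q)\bigr]J_k^{-1/2}$ on the orthogonal subspace, so by Lemma \ref{lemmaNOTAY}
\[
\|\wP\|\;\le\;\frac{2}{\lambda_2-\lambda_1}\bigl(1+\|E_k\|\bigr),
\]
which is bounded as long as $\|E_k\|$ stays bounded. Combining all estimates,
\[
\|E_{k+1}\|\;\le\;\|E_k\|\;+\;C_1\|\fe_k\|\;+\;2\|S_k\|\|\wP\|\sqrt{\lambda_n}\;+\;\|S_k\|^2\|\wP\|
\;\le\;\|E_k\|+K\sqrt{\|\fe_k\|}
\]
after choosing $\delta$ small enough that $\|\fe_k\|\le\sqrt{\|\fe_k\|}$ and lumping the constants into $K$.

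The main obstacle I anticipate is the algebraic identity $\Jk\wP\Jk=Q+(I_k-Q)(I_k-E_k)(I_k-Q)$: it is what turns a naive bound involving $\|\wP\|$ (which could blow up through the BFGS update) into the clean contractive expression $(I_k-Q)E_k(I_k-Q)$, and it also supplies the a priori bound on $\|\wP\|$ needed to close the argument without appealing circularly to Lemma \ref{lemma1} at step $k+1$. The second delicate point is keeping the two different ``identities'' $I_k=I-\fu_k\fu_k\t$ and $I_{k+1}=I-\fu_{k+1}\fu_{k+1}\t$ straight, which is handled by the $O(\|\fe_k\|)$ bound on $\fu_k\fu_k\t-\fu_{k+1}\fu_{k+1}\t$ already extracted inside Lemma \ref{lemma4}.
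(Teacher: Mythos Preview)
Your argument follows the paper's proof closely: the central algebraic identity is the same. Your $(I_k-Q)E_k(I_k-Q)$ is exactly the paper's $WE_kW$ (with $W=I-\fw\fw\t$, $\fw=\Jk\fs/\|\Jk\fs\|$, so $\fw\fw\t=Q$); the two agree on the relevant subspace since $\Jk\fu_k=0$. Your explicit bookkeeping of the projector mismatch $\fu_k\fu_k\t-\fu_{k+1}\fu_{k+1}\t$ is a welcome clarification of something the paper leaves implicit under Remark~\ref{rem1}.

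The one substantive deviation is how you control $\|\wP\|$. The paper applies Lemma~\ref{lemma1} at step $k{+}1$, obtaining $\|\wP\|\le \frac{2}{\lambda_2-\lambda_1}(1+\|E_{k+1}\|)$; this puts $\|E_{k+1}\|$ on both sides, and the paper then solves the resulting implicit inequality $\|E_{k+1}\|(1-c_4\se)\le\|E_k\|+c_4\se$, after first extracting a crude a~priori bound $\|E_k\|\le 2^k(\delta_0+1)$. You instead invert the identity $\Jk\wP\Jk=I_k-(I_k-Q)E_k(I_k-Q)$ on $\fu_k^\perp$ to get $\|\wP\|\le\frac{2}{\lambda_2-\lambda_1}(1+\|E_k\|)$ directly, which is cleaner and bypasses the implicit inequality. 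What this buys you is an explicit recursion; what it costs is that your final constant $K$ still depends on a uniform bound for $\|E_k\|$, which you wave at with ``as long as $\|E_k\|$ stays bounded.'' To close this you need a short induction: assuming $\|E_j\|\le \delta_0+1$ for $j\le k$, the recursion together with $\|\fe_j\|\le r^j\delta$ gives $\|E_{k+1}\|\le\delta_0+K\sqrt{\delta}\sum_j r^{j/2}\le\delta_0+1$ for $\delta$ small. This is the analogue of the paper's bootstrap step and should be stated, not assumed.

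One caution shared with the paper: your bound on $\|\wP\|$ is the norm restricted to $\fu_k^\perp$, whereas the cross terms $S_k\wP\Jk$, $S_k\wP S_k$ feed $\wP$ vectors that need not lie in $\fu_k^\perp$ (since $\fu_k\t S_k=\fu_k\t\JJ\neq 0$ in general). The paper is equally informal here; both arguments go through once one observes that the off-subspace component of $S_k$ is itself $O(\|\fe_k\|)$ and that $\wP\fu_k$ can be bounded via Theorem~\ref{spd}, but it is worth being aware that this is where the subspace accounting is most delicate.
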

\begin{proof}
The distance of the preconditioned Jacobian from the identity matrix %$E_{k+1} = I - \JJ \wP \JJ$ 
can be written as follows,
where we have defined $N = S_k \wP \JJ + \JJ \wP S_k  +  S_k \wP S_k $:
\begin{eqnarray}
	E_{k+1} &=& I - \JJ \wP \JJ  =  \nonumber \\
	        &=& I - \left(\sJ + S_k\right) \wP \left(\sJ  + S_k \right)  =  \nonumber \\
		&=& I - \sJ \wP \sJ - N  =  \nonumber \\
		       &=&  I -  \sJ \frac{\fs \fs\t}{\fs\t J_k \fs} \sJ   -
	                \sJ \left(I - \frac{\fs \fs\t J_k}{\fs\t J_k \fs}\right) \wk
			\left(I - \frac{J_k \fs \fs\t}{\fs\t J_k \fs}\right)  \sJ  - N
	=  \nonumber \\
		       &=&  I -   \frac{\sJ \fs \fs\t\sJ }{\fs\t J_k \fs}   -
	\left(I - \frac{\sJ \fs \fs\t \sJ }{\fs\t J_k \fs}\right) \sJ  \wk  J_k^{1/2}
			\left(I - \frac{\sJ \fs \fs\t\sJ}{\fs\t J_k \fs}\right)  - N. \nonumber \\
\end{eqnarray}
Now set $\fw = \dfrac{\sJ \fs}{\|\sJ \fs\|}$ and $W = I - \fw \fw\t$; $W$ is an orthogonal projector since $\|\fw\| = 1$.  Then
\begin{eqnarray}
\label{Ekp1}
	E_{k+1} &= &W - W  \sJ  \wk  \sJ W - N  \nonumber \\
	 & = &  W + W  (E_k - I) W - N  = W E_k W - N
\end{eqnarray}
To bound $\|N\|$ we will use Lemma \ref{lemma1} and Lemma \ref{lemma5}:
\[\|N\|  \le \frac{2}{\lambda_2 - \lambda_1} \left(1 + \|E_{k+1}\|\right) \left(2 c_3 \sqrt{\|\fe_k\|} \sqrt{\lambda_n} + c_3^2 \sqrt{\delta} \se  \right) = 
c_4 \left(1 + \|E_{k+1}\|\right)  \sqrt{\|\fe_k\|}. \]
Now taking norms in (\ref{Ekp1}) yields
\[\| E_{k+1} \| \le \|E_k\| + c_4 \left(1 + \|E_{k+1}\|\right)  \sqrt{\|\fe_k\|},\]
which can be rewritten as
\begin{equation}
\| E_{k+1}\| \left(1- c_4 \se \right) \le \|E_k\| + c_4   \se .
\label{recur}
\end{equation}
From (\ref{recur}), we derive a bound for $\|E_k\|$.  If $\sqrt{\delta} < \dfrac{1}{2 c_4}$ then 
\begin{equation}
	\label{boundE}	
	\| E_k\| \le 2 \|E_{k-1}\| + 1  \le \ldots \le 2^{k} \|E_0\| + 2^k -1 \le 
2^{k} \left(\delta_0 +1 \right) = c_5.
\end{equation}

Again from (\ref{recur}) and using the bound (\ref{boundE}) we finally have
\begin{eqnarray*}
\| E_{k+1}\|  &\le& \frac{\|E_k\| + c_4   \se }{1 - c_4 \se } \nonumber \\
              &\le& \left(\|E_k\| + c_4   \se \right) \cdot (1 + 2 c_4 \se ) \nonumber \\
	      &\le & \|E_k\| + 2 c_4  \se  c_5 + c_4 (1 + 2 c_4 \delta) \se  = \|E_k\| + c_4 \left(2 c_5 + 1 + 2 c_4 \delta\right) \se .
\label{recur1}
\end{eqnarray*}
Setting $K = c_4 \left(2 c_5 + 1 + 2 c_4 \delta\right)$ completes the proof.
\end{proof}
\vspace{4mm}

\begin{Remark}
	It is more usual to evaluate the goodness of a preconditioner by bounding the extremal eigenvalues of the preconditioned
	matrix (if SPD) instead of using norms. However, it is worth observing that the initial preconditioner can be 
	selected so as to give $\rho\left(\sn  P_0 \sn\right) < 2$. In such case, in the most common situation  we would have
	\[ \|E_{k}\| = \max \left\{\left|\lambda(E_k) \right|\right\} = 1 - \min \left\{\lambda\left(\sJ  P_k \sJ\right)\right\}\] so that minimizing
	$\|E_k\|$ is the same as maximizing the smallest eigenvalue of the preconditioned matrix.
\end{Remark}

\subsection{Computing several eigenpairs}
When seeking an eigenvalue different from $\lambda_1$, say $\lambda_j$, the Jacobian matrix changes as
\[ J_k = (I - Q Q\t) (A - \theta_k I)  (I - Q Q\t) \]
where $Q  = \left [ \fv_1 \ \fv_2 \ldots \fv_{j} \ \fu_k\right ] 	$ is the matrix whose first $j$ columns
are the previously computed eigenvectors.
Analogously, also the preconditioner must be chosen orthogonal to $Q$  as
\begin{equation}
\label{precQ}
 P_{k+1} = (I - Q Q\t) \widehat{P}_{k+1}(I - Q Q\t). 
\end{equation}
The theoretical analysis developed in the previous Section applies with small technical variants also in this case since it is readily
proved that $\JJ P_{k+1} \JJ = \JJ \wP \JJ$.
The most significant changes regard the definition of $\varepsilon_k = \theta_k - \lambda_j, \ \fe_k = \fu_k - \fv_j$ and
the statement of Lemma \ref{lemmaNOTAY} (and the proof of Lemma \ref{lemma4} that uses its results), 
namely the bound for the smallest eigenvalue of $J_k$ which in the general case  becomes:
\[       \fz\t J_k \fz >   \frac{\lambda_{j+1} - \lambda_j}{2}  \]
        for every unit norm vector $\fz$ such that  $Q\t \fz = 0$.

	\section{Implementation}
\subsection{Choosing an initial eigenvector guess}
As mentioned in Section 1, another important issue  in the efficiency of the Newton approach 
for eigenvalue computation is represented by the appropriate choice
of the initial guess.
We propose here to perform some preliminary iterations of another eigenvalue solver, in order
to start the Newton iteration `sufficiently' close to the exact eigenvector.
We chose as the `preliminary' eigenvalue solver DACG \cite{bgp97nlaa,berpinsar98,BergamaschiPutti02}, 
which is based on the preconditioned conjugate
gradient (nonlinear) minimization  of the Rayleigh Quotient.
This method has proven very robust, and not particularly sensitive to the initial vector, in the computation of a few eigenpairs of
large SPD matrices.
%
%
%{\|\fs_k\|}$  \ and  \ 
%$\fw_k = \dfrac{\fu_k}{1 + \fv_k \fu_k} $,
%then
%\begin{equation}
% P_{k} =\left(I - \fw_{k-1} \fv\t_{k-1} \right) P_{k-1} =
% \left(I - \fw_{k-1} \fv\t_{k-1} \right) \left(I - \fw_{k-2} \fv\t_{k-2} \right) \cdots \left(I - \fw_0 \fv\t_0 \right)P_0
%\label{Bnext}
%\end{equation}
%If $k > 0$, let us suppose that $\fu_i, \fv_i, \fw_i$ are known,
%$i = 0,\ldots, k-2$. Then to be able to 
%compute $\fc = P_{k} \fz_k^{(l)}$, before starting the linear system solution
% we have  to compute $\fv_{k-1}, \fu_{k-1}$
%and $\fw_{k-1}$
%where in the computation of $ \fu_{k-1}$, $P_{k-1}\fy_{k-1}$ can be evaluated using (\ref{Bnext})
%at the price of an application of $P_0$, 
%$k-1$ dot products and $k-1$ daxpys.
%Now we are ready to compute
%\[ \fc =\left(I - \fw_{k-1} \fv\t_{k-1} \right) 
%        \left(I - \fw_{k-2} \fv\t_{k-2} \right) 
%\cdots \left(I - \fw_0 \fv\t_0 \right)P_0 \fz_k^{(l)}\]
%at the price of an application of $P_0$, 
%$k$ dot products and $k$ daxpys.
%
%
%\subsection{BFGS update}
%
%
\subsection{Implementation of the BFGS preconditioner update}
In this section we give the main lines of the implementation
of the product of our preconditioner times a vector,
which is needed when using a preconditioned Krylov method. 
At a certain nonlinear  iteration level, $k$, we need
to compute $\fc = \P_{k} \fg_l,$
where $\fg_l$ is the residual of the linear system at iteration $l$.
Let us suppose we compute an initial preconditioner $\P_0$. Then,
at the initial nonlinear iteration $k = 0$,
we simply have 
$ \fc = \P_0 \fz_l $. 
At step $k+1$ the preconditioner $\wP$ is defined recursively by (\ref{lastprec}) while $P_{k+1}$
using  (\ref{precQ}) can be written as
	\begin{eqnarray}
		P_{k+1} &=& (I - Q Q\t) \wP (I - Q Q\t) =  \nonumber \\
			       &=& (I - Q Q\t)\left\{\left(I - \frac{\fs \fr\t}{\fs\t \fr}\right) \ww_k
		\left(I - \frac{\fr \fs\t}{\fs\t \fr}\right)  
	-\frac{\fs \fs\t}{\fs\t \fr}\right\} (I - Q Q\t)    .
	\label{BFGS}
	\end{eqnarray}
To compute vector $\fc$ first we observe that $\fg_l$ is orthogonal to $Q$ 
so there is no need to apply matrix $I - QQ\t$ on the right of (\ref{BFGS}).
Application of preconditioner $\wP$ to the vector  $\fg_l$ can be performed
at the price of $2k$ dot products and $2k$ daxpys as described in Algorithm \ref{bfgspk}.
The scalar products $\alpha_k = \fs\t_k \fr_k, $ which appear at the denominator of $\wP$, can be computed
once and for all before starting the solution of the $(k+1)$-th linear system.
Last, the obtained vector $\fc$ must be orthogonalized against the columns of $Q$ by a classical Gram-Schimdt procedure.
\begin{algorithm}[h!]
\caption{Computation of $\fc = P_k \fg_l$ for the BFGS preconditioner.}
	%\hrulefill
	\label{DACGalg}
	\begin{list}{}{\topsep=1.0em\itemsep=0.0em\parsep=0.5em}
		\item {\sc Input:} Vector $\fg_l$, scalar products $\alpha_s = \fs\t_s \fr_s, \ s = 0, \ldots, k-1$.
			%\item {\sc DACG algorithm}
								   \item $\fw := \fg_l $   
								   \item {\sc for} $s := k-1$ \ {\sc to}\   $0$   
			  \begin{list}{\theenumi.}{\usecounter{enumi}%
					                 \leftmargin=3.0em\itemsep=0.1em\topsep=-0.5em}
%								   \item $  \alpha_r := \fs\t_s \fr_s. $ 
								   \item $ a_s  :=  \fs_s\t \fw/\alpha_s$    
								   \item $ \fw  :=  \fw - a_s \fr_s$    
									   \end{list}
								   \item {\sc end for}   
								   \item $\fc :=  \ww_0 \fw$    
								   \item {\sc for} $s := 0$ \ {\sc to}\  $ k-1$   
			  \begin{list}{\theenumi.}{\usecounter{enumi}%
					                 \leftmargin=3.0em\itemsep=0.1em\topsep=-0.5em}
								   \item $ b  :=  \fr_s\t \fc /\alpha_s$   
								   \item $ \fc  :=  \fc - (a_s + b) \fs_s  $   
									   \end{list}
								   \item {\sc end for}   
								   \item $\fz := Q\t \fc$
								   \item $\fc := \fc - Q \fz$
\end{list}
\label{bfgspk}
\end{algorithm}

\subsection{PCG solution of the correction equation}
As a Krylov subspace solver for the correction equation we chose the Preconditioned Conjugate gradient (PCG) method
since the Jacobian $J_k$ has been shown to be SPD in the subspace orthogonal to $\fu_k$.
Regarding the implementation of PCG, we mainly refer to the work \cite{notay}, where the author shows that it is possible
to  solve the linear system in the subspace orthogonal to $\fu_k$ and hence the projection step needed in the application 
of $J_k$ can be skipped. Moreover, we adopted the exit strategy for the linear system solution described in the above paper,
which allows for stopping the PCG iteration, in addition to the classical exit test based on a tolerance on
the relative residual and on the maximum number of iterations, whenever the current solution 
$\fx_{l}$ satisfies 
\begin{equation}
	\label{residual}
 \|\fr_{k,l}\| = \left \|A\fx_l - \dfrac{\fx_l\t A \fx_l}{\fx_l\t \fx_l} \fx_l \right\| < \tau \left(\fx_l\t A \fx_l\right) 
\end{equation}
or when the decrease of $\|\fr_{k, l}\|$ is slower
than the decrease of $\|\fg_l\|$, because in this case further iterating does not improve the accuracy of the eigenvector.
Note that this dynamic exit strategy implicitly defines an Inexact Newton method since the correction equation is not
solved ``exactly'' i.e. up to machine precision. 

We have implemented the PCG method as described in Algorithm 5.1 of \cite{notay}
with the obvious difference in the application of the preconditioner which is described here in Algorithm \ref{bfgspk}.
\subsection{Implementation of the DACG-Newton method}
The BFGS preconditioner defined in Algorithm \ref{bfgspk}
suffers from two main drawbacks, namely
increasing costs of memory for storing $\fs$ and $\fr$, and the increasing cost 
of preconditioner application with the iteration index $k$. Note that
these drawbacks are common to many iterative schemes, such as for example
sparse (Limited Memory) Broyden implementations~\cite{MR1112525}, 
GMRES~\cite{saadschultz86} and Arnoldi method for eigenvalue problems~\cite{ARPACK}.
There are different ways to overcome these difficulties, all
based on variations of a restart procedure, that is, the iteration scheme  
is reset after a fixed number of iterations. 
If the number of nonlinear iterations is high (e.g. more than ten iterations),
the application of BFGS preconditioner may be too heavy to
be counterbalanced by a reduction in the iteration number.
To this aim we  define $k_{\max}$ the maximum number of rank two
corrections we allow. 
When the nonlinear iteration counter $k$ is larger than
$k_{\max}$, the vectors $\fs_i, \fr_i,\  i = k-k_{\max}$ are substituted
with the last computed $\fs_k, \fr_k$. Vectors $\{\fs_i, \fr_i\}$
are stored in a matrix $V$ with $n$ rows and $2 \times k_{\max}$ columns.

The implementation of our DACG-Newton method for computing the leftmost eigenpairs
of large SPD matrices is described in Algorithm \ref{DACG-Newton}.
\begin{algorithm}
\caption{DACG-Newton Algorithm.}
	%\hrulefill
\label{DACG-Newton}
\begin{itemize}{}{\topsep=1.0em\itemsep=0.0em\parsep=0.5em}
	\item {\sc Input:} 
	\begin{enumerate} \item \text{Matrix} $A$; 
			\item 			\text{number of sought eigenpairs} $n_{eig}$; 
			\item \text{tolerance and maximum number of its for the outer iteration:} $\tau$, {\sc itmax}; 
			\item \text{tolerance for the initial eigenvector guess } $\tau_{DACG}$; 
			\item \text{tolerance and maximum number of its for the inner iteration:} $\tau_{PCG}$, {\sc itmax$_{PCG}$}; 
			\item \text{parameters for the IC preconditioner:}, {\sc lfil} and $\tau_{IC}$;
			\item \text{maximum allowed rank-two updates in the BFGS preconditioner}: $k_{\max}$.
		\end{enumerate}
	\item $\wQ := [\ ] $.
	\item Compute an incomplete Cholesky factorization of $A$: $\widehat{P}_0$ with parameters ${\lf}$ and $\tau_{IC}$.
	\item {\sc for} $j := 1$ \ {\sc to}\   $n_{eig}$   
			  \begin{list}{\theenumi.}{\usecounter{enumi}%
					                 \leftmargin=3.0em\itemsep=0.1em\topsep=-0.5em}
							 \item Choose $\fx_0$ such that $\wQ\t \fx_0 = 0$.
							 \item Compute $\fu_0$, an approximation to $\fv_j$ by the DACG procedure
								 with initial vector $\fx_0$, preconditioner $\widehat{P}_0$ 
								 and tolerance $\tau_{DACG}$.

	\item $k := 0$, $\theta_k := \fu_k\t A \fu_k$.
\item {\sc while}\  $\| A \fu_k - \theta_k \fu_k  \| >  \tau \theta_k  $
\ {\sc and} \ $k < \IN $ \ {\sc do}
			  \begin{list}{\theenumi.}{\usecounter{enumi}%
					                 \leftmargin=3.0em\itemsep=0.1em\topsep=-0.5em}
 \item $Q := [\wQ \ \fu_k].$
\item
Solve   $J_k \fs_k = -\fr_k $ for $\fs_k \perp Q$
by the PCG method with preconditioner $P_k$ and tolerance $\tau_{PCG}$.
\item
	$\fu_{k+1} := \dfrac{\fu_k + \fs_k}{\| \fu_k + \fs_k\|}$, \ $\theta_{k+1} = \fu_{k+1}\t A \fu_{k+1}$.
\item $k_1 = k$ {\sc mod} $k_{\max}$; $V(*,2k_1+1) := \fs_k, \ V(*,2k_1+2) := \fr_k,$ 
\item
$k := k + 1$
			   \end{list}
		   \item {\sc end while}
		 \item Assume $\fv_j = \fu_k$ and $\lambda_j = \theta_k$.  Set $\wQ : = [\wQ \ \fv_j]$
			   \end{list}
		   \item {\sc end for}
  \end{itemize}
\end{algorithm}

\noindent

The above described implementation is well suited to parallelization provided that an efficient matrix-vector
product routine is available. The bottleneck is represented by the high number of scalar products
which may worsen the parallel efficiency when a very large number of processor is employed. Preliminary
numerical results are encouraging as documented in \cite{bm13jam}.

\section{Numerical Results}
In this Section we provide numerical results which compare the performance of the DACG-Newton algorithm for various $k_{\max}$ values.
We tested the proposed algorithm in the computation of the 20 smallest eigenpairs
of a number of  small to very large matrices arising from  various realistic applications.

The list of the selected problems together with their size $n$, and  nonzero number $nz$ is reported in Table
\ref{list}, where (M)FE stands for (Mixed) Finite Elements.

\begin{table}[h!]
	\label{list}
		\caption{Main characteristics of the matrices used in the tests.}

		\vspace{2mm}
	\begin{center}
		\begin{tabular}{llrr}
			\hline
		Matrix & where it comes from & $n$ & $nz$ \\
			\hline
    {\sc trino} & 3D-FE elasticity problem  & 4560 & 604030 \\
    {\sc hyb2d} & 2D-MFE groundwater flow  & 28600 & 142204 \\
    {\sc monte-carlo} & 2D-MFE stochastic PDE   & 77120 & 384320 \\
    {\sc emilia-923} & 3D-FE elasticity problem &  923136  &  41\,005206 \\
    {\sc dblp} & network connected graph  &  928498  &  8\,628378 \\
			\hline
	\end{tabular}
	\end{center}
\end{table}

\noindent
In most of the runs, unless differently specified,  we selected the values of the parameters as reported
in Table \ref{default}.
\begin{table}[h!]
	\label{default}
	\begin{center}
	\caption{Default values of parameters.}
	\vspace{2mm}
\begin{tabular}{ll}

	\hline

	Number of eigenpairs to compute &	$n_{eig} = 20$ \\ 
		    Parameters for the outer iteration & $\tau = 10^{-8}$, \quad {\sc itmax} $ = 100$ \\
	   Tolerance for the initial eigenvector guess &  $\tau_{DACG} = 10^{-2}$ \\
			Parameters for the PCG  iteration & $\tau_{PCG} = 10^{-2}$, \quad {\sc itmax$_{PCG} = 20 $} \\
	 Parameters for the initial preconditioner & ${\lf} = 30$,  \quad $\tau_{IC} = 10^{-2}$.\\
	\hline
\end{tabular}
	\end{center}
\end{table}

We will also compute the fill-in $\sigma$ of the initial preconditioner defined as
\[ \sigma = \frac
{\text{nonzeros of $L$}} 
{\text{nonzeros of lower triangular  part of $A$}}
\]
The CPU times (in seconds) refer to running a Fortran 90 code on an IBM Power6 at 4.7 GHz and with up to 64 Gb of RAM.

\subsection{Condition number of the preconditioned matrix}
We consider first the small {\sc trino} problem, for which  we were able to compute
all the eigenvalues of $\widetilde{J}_k = J_k^{1/2} P_{k}J_k^{1/2} $.
In Table \ref{condtrino} we report for each eigenvalue level $j$, the smallest eigenvalue of $\widetilde{J}_0$
and of $\widetilde{J}_5$, together with the ratio between condition numbers 
$\dfrac{\kappa(\widetilde{J}_0)}{\kappa(\widetilde{J}_5)}$ where 
\[ \kappa(\widetilde{J}_k) = \frac{\max \{ \lambda (\widetilde{J}_k) \}}{\min  \{ \lambda (\widetilde{J}_k)  > 0\}}, \]
and the reciprocal of the relative separation between consecutive eigenvalues,  $\xi_j = \dfrac{\lambda_j}{\lambda_{j+1}-\lambda_j}$
which is indicative of the ill-conditioning
of $\widetilde{J}_0$ \cite{bgp97nlaa,bmp12jam}.

\begin{table}[h!]
	\label{condtrino}
	\caption{Smallest eigenvalue of the preconditioned Jacobians: $\widetilde{J}_0$ and $\widetilde{J}_5$, condition number ratio
	and reciprocal of the relative separation $\xi_j$, for $j=1, \ldots, 20$. Matrix {\sc trino}.}
\begin{center}
	\begin{tabular}{r|ccrr||r|ccrr}
		\hline
	$j$ & $\lambda_{\min}(\widetilde{J}_0)$
	    & $\lambda_{\min}(\widetilde{J}_5)$ 
	    & $\dfrac{\kappa(\widetilde{J}_0)}{\kappa(\widetilde{J}_5)}$ &$ \xi_j$& 
	$j$ & $\lambda_{\min}(\widetilde{J}_0)$
	    & $\lambda_{\min}(\widetilde{J}_5)$ 
	    & $\dfrac{\kappa(\widetilde{J}_0)}{\kappa(\widetilde{J}_5)}$ &$\xi_j$\\
		\hline
		       1&       .0211&       .0542&        2.57&        1.57
		 &
		      11&       .0039&       .0192&        4.89&       27.99
		\\
		       2&       .0093&       .0432&        4.64&        7.55
		 &
		      12&       .0058&       .0151&        2.59&       31.45
		\\
		       3&       .0154&       .0403&        2.62&        3.79
		 &
		      13&       .0031&       .0127&        4.15&       34.85
		\\
		       4&       .0051&       .0256&        5.01&       13.69
		 &
		      14&       .0045&       .0285&        6.29&       36.78
		\\
		       5&       .0168&       .0417&        2.48&        5.91
		 &
		      15&       .0050&       .0284&        5.68&       24.74
		\\
		       6&       .0028&       .0316&       11.13&       37.25
		 &
		      16&       .0085&       .0234&        2.75&       30.27
		\\
		       7&       .0024&       .0349&       14.64&       51.94
		 &
		      17&       .0027&       .0197&        7.33&       81.48
		\\
		       8&       .0199&       .0329&        1.65&        5.13
		 &
		      18&       .0017&       .0307&       18.49&       51.52
		\\
		       9&       .0075&       .0149&        1.99&       22.62
		 &
		      19&       .0137&       .0378&        2.76&       11.28
		\\
		      10&       .0065&       .0196&        2.99&       19.11
		 &
		      20&       .0074&       .0296&        3.99&       22.98
		\\

		\hline

\end{tabular}
\end{center}
\end{table}

It is found that there is a constant  grow of the smallest eigenvalue from $k=0$ to $k=5$. Moreover
the condition number of the preconditioned matrix reduces by a factor varying from 1.65 ($j=8$) to 18.49 ($j=18$),
the reduction of the condition number being more important when $\xi_j$ is large, i.e. when the initial Jacobian
is ill-conditioned. Referring now to Theorem  3.7 we may observe that, being in this test case
$\lambda_{\max}(\widetilde{J}_k) \approx 1.5$ for every eigenpair,  \[\|E_k\| = \|I - \widetilde{J}_k\| = 
\max \{\lambda_{\max}(\widetilde{J}_k)-1 , 1 - \lambda_{\min}(\widetilde{J}_k)\}  = 1- \lambda_{\min}(\widetilde{J}_k). \] 
From Table \ref{condtrino}  we then could also easily compute $\|E_k\|$,
for $k = 0,5$.  For instance for $j=18$ we find $\|E_0\| = .9983 $ and $\|E_5\| = .9693$  
showing that even a small
reduction of $\|E_k\|$ may lead to an important reduction in the condition number of $\widetilde{J}_k$.

In Figure \ref{condnum} we plot the  condition number of the preconditioned Jacobian
for two  selected eigenpairs ($j=7, 18$),
vs outer iteration index using $k_{\max} = 0$, i.e. using $P_0$ as the preconditioner for all the Newton systems,
or $k_{\max} = 10$, i.e. using the BFGS preconditioner with no restart.
From the figure we notice that the condition number of  $\widetilde{J}_k$  remains roughly constant through the nonlinear iterations with
$P_k = P_0$ while it decreases  significantly if the BFGS preconditioner is employed.  
%This qualitative behavior is common to every eigenpair level, where the condition number
%of $\widetilde{J}_k$ at the last linear system is from 5 to 15 times less than  $\kappa(\widetilde{J}_0)$, depending
%on the eigenpair considered.

%
\begin{figure}[h!]
\label{condnum}
	\caption{Condition number of the preconditioned matrices $J_k^{1/2} P_{k}J_k^{1/2} $ 
vs outer iteration number, 
		in solving the {\tt trino} problem
	for $k_{\max}=0, 10$ and two different eigenpair levels ($j = 7$ and $j = 18$).}
	\vspace{2mm}
\begin{center}
\includegraphics[width=.6\textwidth]{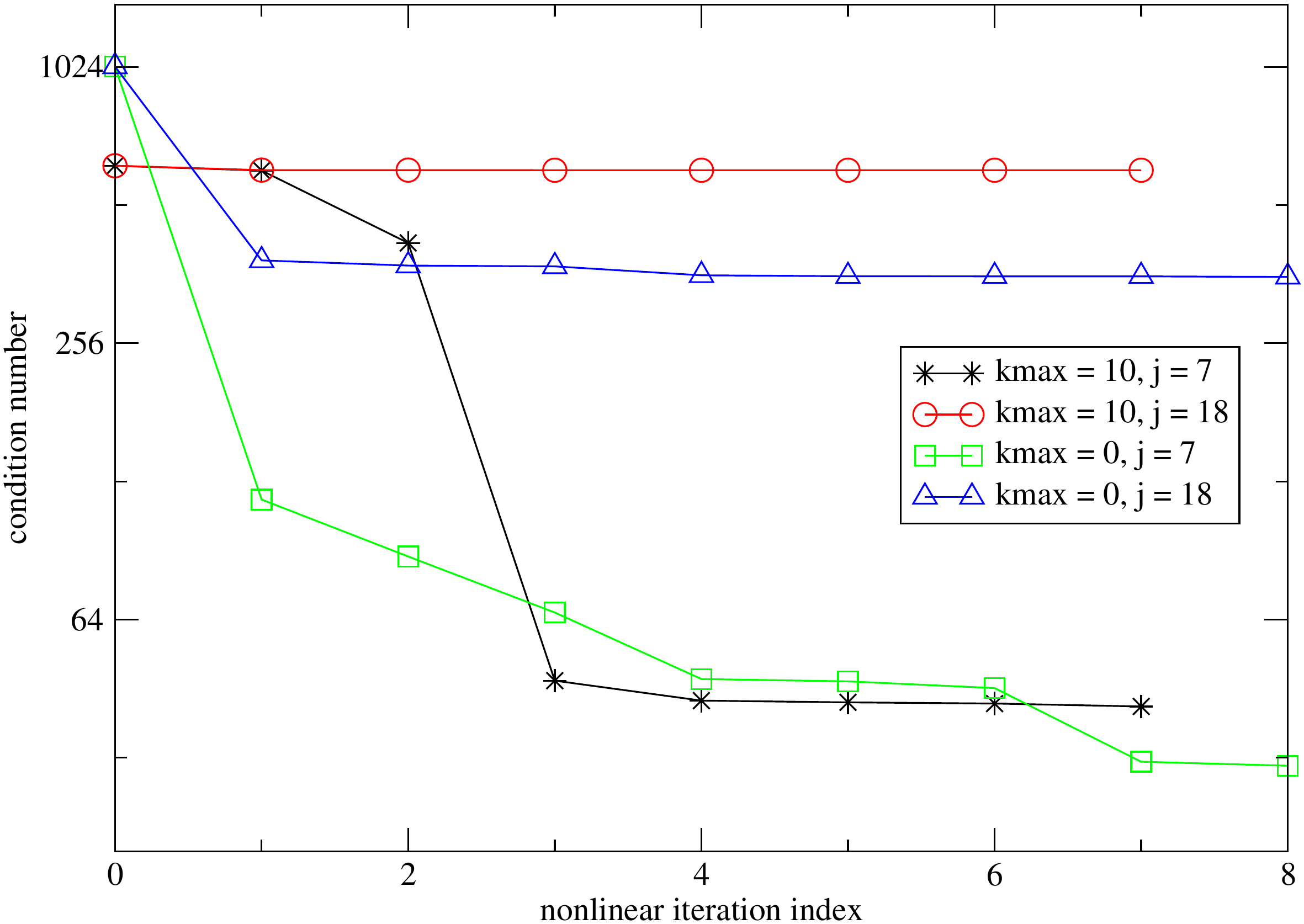}
\end{center}
\end{figure}

\subsection{Influence of parameter $k_{\max}$}
We now report  the results of our DACG-Newton method 
in the computation of the 20 leftmost eigenpairs of matrices {\sc hyb2d} and {\sc monte-carlo}.
The overall results are summarized in Tables \ref{tabcancun} and \ref{tabmonte} 
where  we include CPU times and number of matrix vector products (MVP)
for both the DACG initial phase and the Newton iteration. The overall outer Newton iterations (outer its) are also given.
From the tables  we notice that whatever the value of $k_{\max}$  there is an improvement in the total number of  MVP  and
CPU time compared with keeping  the initial preconditioner fixed throughout the nonlinear process.
The improvement is also irrespective of the maximum number of PCG iterations $\IL$.
Observe that increasing $k_{\max}$ also the outer iteration number decreases, meaning that
the proposed preconditioner, together with accelerating the correction equation solution, 
also speeds up nonlinear convergence to the desired eigenvector.
The CPU time values account for the fact that the overhead introduced by the application of $P_{k+1}$ for high
$k_{\max}$ values is not important. To this end it is worth emphasizing that in most cases $5$ outer iterations 
are enough to allow convergence of the Newton method and this also  explains why setting 
$k_{\max} \ge 5$  the results do not substantially change.
\begin{table}[h!]
\caption{Total number of iterations, number of MV products and CPU times for DACG-Newton algorithm
with various $k_{\max}$ values and two different values of $\IL$. Matrix {\sc hyb2d}. The initial preconditioner
fill-in is $\sigma = 2.19$.}
\vspace{2mm}
\label{tabcancun}
\begin{center}
\begin{tabular}{|c|r|cc|rrr||rr|}
	\hline
	&&	\multicolumn{2}{|c|}{DACG} &
	\multicolumn{3}{c||}{Newton} &
	\multicolumn{2}{c|}{TOT} \\
	\hline
	$\IL$&	$k_{\max}$ &  its & CPU & outer its & MVP & CPU & MVP & CPU \\
	\hline
  10 & 0    &944   & 3.03 &  261 &2730 & 7.85&3674 &     10.88  \\
     & 1    &944   & 2.98 &  160 &1647 & 4.93&2591 &      7.91  \\
     & 5    &944   & 2.97 &  110 &1134 & 3.50&2078 &      6.47  \\
 \hline
20 &  0    &944   & 2.99 &  118 &1879 & 5.15&2823    &  8.14  \\
    & 5    &944   & 2.99 &   87 &1297 & 3.77&2241    &   6.76  \\
\hline
 DACG &&  4033 &  11.67 & & & & 4033 & 11.67 \\
 \hline
\end{tabular}
\end{center}
\end{table}

\begin{table}[h!]
\caption{Total number of iterations, number of MV products and CPU times for DACG-Newton algorithm
with various $k_{\max}$ values and three different values of $\IL$. Matrix {\sc monte-carlo}. The initial preconditioner
fill-in is $\sigma = 2.10$.}
\vspace{2mm}
\label{tabmonte}
\begin{center}
\begin{tabular}{|c|r|cc|rrr||rr|}
	\hline
	&&	\multicolumn{2}{|c|}{DACG} &
	\multicolumn{3}{c||}{Newton} &
	\multicolumn{2}{c|}{TOT} \\
	\hline
	$\IL$&	$k_{\max}$ &  its & CPU & outer its & MVP & CPU & MVP & CPU \\
	\hline
 30 &  0  & 1921 &  15.98 &  148& 3969 & 30.10 & 4890 & 46.08 \\
    & 1   &1921  & 15.94  & 120 &2799    &22.06&4720  &38.00 \\
    & 3   &1921  & 15.81  & 102 &2264  &18.13 &4185   &33.94 \\
    & 10  & 1921 &  15.87 &  103& 2242 & 18.22 & 4163 & 34.09 \\
	\hline
 20 & 0   &1921  & 15.85  & 267 &5295  &39.88&7216    &55.73 \\
    & 1   &1921  & 15.84  & 155 &2998  &23.88&4919    &39.72 \\
    & 2   &1921  & 16.00  & 133 &2615    &21.23 &4536 &37.23 \\
    & 3   &1921  & 15.86  & 126 &2337  &19.16&4258    &35.02 \\
    & 5   &1921  & 15.78  & 114 &2121  &17.58 &4042   &33.36 \\
    & 10  & 1921 &  15.96 &  114 &2121 & 17.67& 4042  & 33.63 \\
	\hline
 10 & 0  &1921   &15.85  &1078&11783   &92.04&13704 &107.85 \\
    & 3  & 1921  & 15.79 &  259 &2740 &23.71& 4661   & 39.50 \\
    & 5  & 1921  & 15.93 &  230 &2434 &21.89 & 4355  & 37.82 \\
    & 10 &  1921 &  16.01&   193& 2042 & 18.84& 3963 &  34.85 \\
	\hline
	DACG & &7307 & 57.31 & & & &7307 & 57.31\\
	\hline
\end{tabular}
\end{center}
\end{table}
Considering for example the case $\IL=20$, matrix {\sc monte-carlo}, the number of MV products in the  Newton phase
reduces from an initial $5295$ to a final value of $2121$ obtained with $k_{\max} \ge 5$.
We finally observe that the DACG-Newton algorithm is always superior to ``pure'' DACG method (run up to  a relative
residual smaller than $\tau_{DACG} = 10^{-8}$) in terms of MV products and
CPU time as reported in the last row of both tables.

We also present two pictures (Figures \ref{monfig12} and \ref{monfig15}), where the relative residual norm $\|\fr_{k,l}\|$, computed by (\ref{residual}) is plotted vs
Newton cumulative linear iteration index for two selected eigenvalue levels. The problem is {\sc monte-carlo} with
$\IL = 20$. Again we let vary $k_{\max} \in [0,10]$.
The Figures confirm the  acceleration of convergence provided by the proposed preconditioner:
there is a factor $3 \div 4$ gain in the total number of iterations, when passing from $k_{\max}=0$ to $k_{\max} = 10$.
\begin{figure}[h!]
        \caption{Convergence profile of the relative residual norm vs cumulative inner iterations
        for eigenvalue \# 12, matrix {\tt monte-carlo.}}
	\vspace{2mm}
	\label{monfig12}
\begin{center}
\includegraphics[width=.7\textwidth]{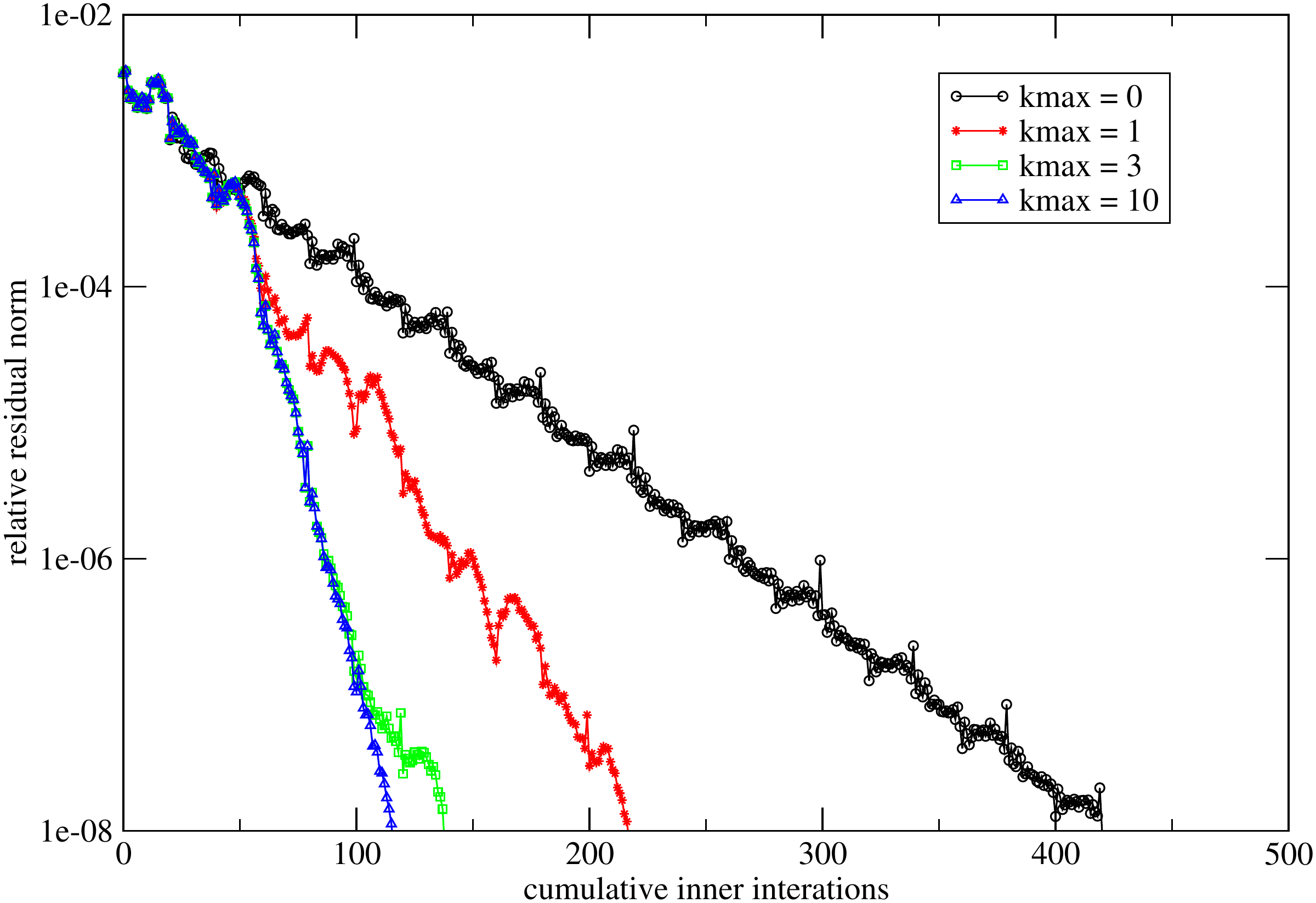}
\end{center}
\end{figure}

\begin{figure}[h!]
        \caption{Convergence profile of the relative residual norm vs cumulative inner iterations
        for eigenvalue \# 15, matrix {\tt monte-carlo.}}
	\vspace{2mm}
	\label{monfig15}
\begin{center}
\includegraphics[width=.7\textwidth]{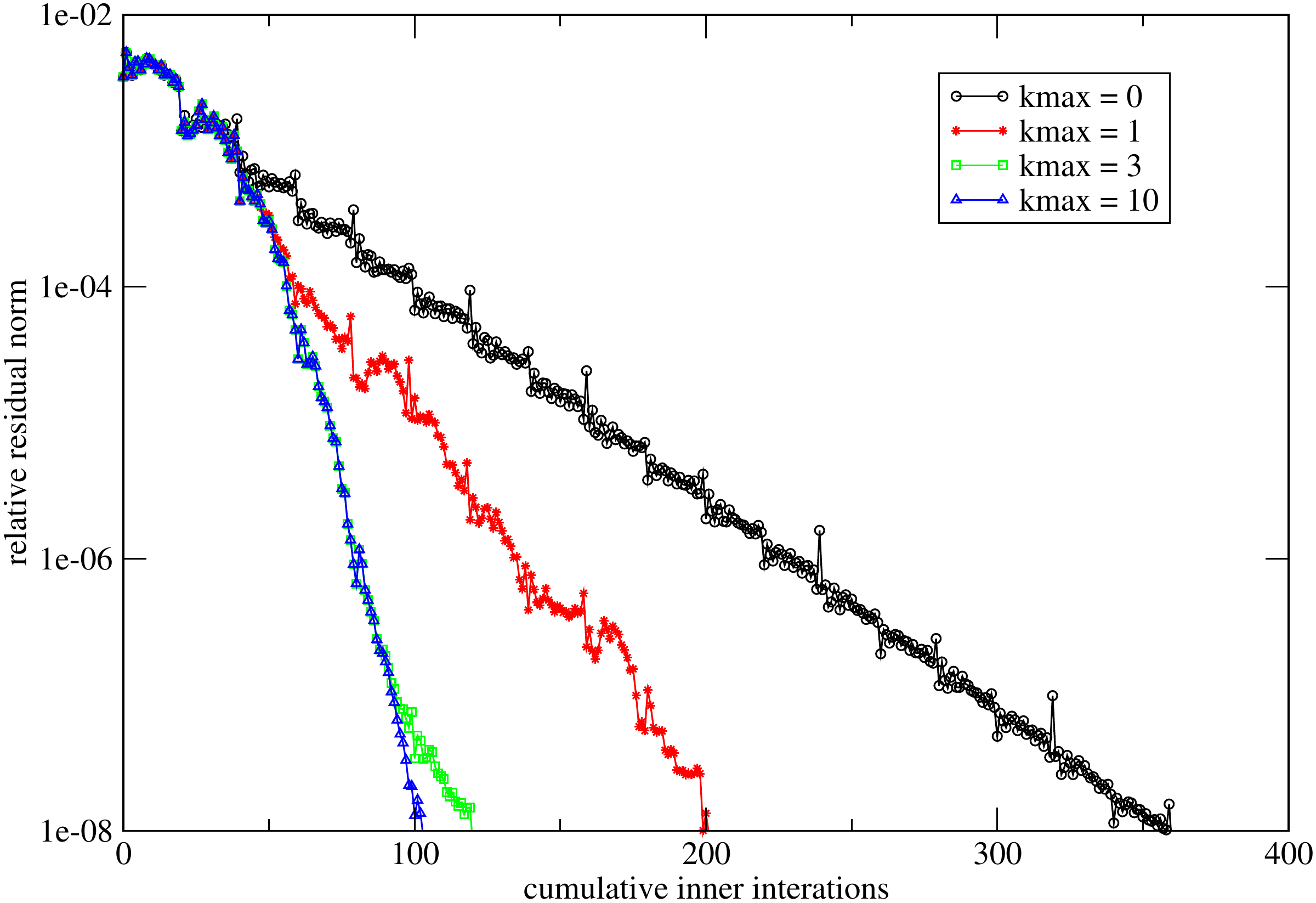}
\end{center}
\end{figure}

\subsection{Role of the initial preconditioner}
We report in this section results of the behavior of our preconditioner depending on the sparsity of $\ww_0$.
We selected two different sets of parameters for the IC preconditioner than those employed in Section 5.2,
namely ${\lf} = 10, \tau_{IC} = 0.1$ (test case \# 1, sparser preconditioner), and ${\lf}=50, \tau_{IC} = 10^{-4}$ 
(test case \# 2, preconditioner more filled-in).
For test case \# 1 we obtained $\sigma = 1.00$ and used $\IL = 30$  while for
    test case \# 2 we obtained $\sigma = 9.04$ and used $\IL = 10$.
\begin{table}[h!]
\caption{Total number of iterations, number of MV products and CPU times for DACG-Newton algorithm
with various $k_{\max}$. Matrix {\sc monte-carlo} with two different initial preconditioners.}
\vspace{2mm}
	\label{precondtab}
	\begin{center}
	\begin{tabular}{|rrrr|rrrr|}
\hline
		\multicolumn{4}{|l|} {test case \# 1 }                   &
	\multicolumn{4}{l|} {test case \# 2}                    \\
			\hline
			$k_{\max}$ &  outer its & MVP  & CPU 			 &
			$k_{\max}$ &  outer its & MVP  & CPU 			 \\
			\hline
		 0    &952 &28097& 174.62  &0    & 86  & 709 & 24.54 \\
		  1   & 340 &10032 & 64.09 &1    & 82  & 614  &20.79 \\
		   3  &  241 & 7008 & 47.94  &3  &   77  & 554  &19.49 \\
		    5 &   214 & 6097 & 42.64 &   5  &   77 &  554 & 20.69 \\
		    10&    193&  5419 & 38.69&    10 &    77 &  554&  20.69 \\
\hline
	\end{tabular}
	\end{center}
\end{table}
The results are summarized in Table \ref{precondtab} for the computation of the 20 
smallest eigenvalues of the {\sc monte-carlo} matrix.
We report only the results regarding the Newton phase, mentioning that in this case we chose $\tau_{DACG} = 0.1$.

The improvement in MV products/CPU time provided by the BFGS preconditioner is impressive with the sparser 
initial preconditioner, while in test case \# 2 the optimal number of iterations is reached with $k_{\max} = 3$.

\subsection{Eigensolution of the largest matrices}
We report the results obtained in evaluating $n_{eig} = 10$ eigenpairs of 
{\sc emilia-923} which  arises from the regional geomechanical model of a deep hydrocarbon
	reservoir.  This matrix is obtained discretizing the  structural problem
	 with tetrahedral Finite Elements.
	 Due to the complex geometry of the geological formation it was
	 not possible to obtain a computational grid characterized by
	 regularly shaped elements.
	 This matrix is publicly available in the University of Florida Sparse
	 Matrix Collection at {\tt http://www.cise.ufl.edu/ research/sparse/matrices}.
 To obtain an efficient initial preconditioner we selected ${\lf} = 50$ and $\tau_{IC} = 10^{-5}$  as the
 IC parameters which gave raise to a sparsity ratio $\sigma = 2.14$. DACG was run until a very high
 tolerance $\tau_{DACG} = 0.2$ 
 was reached. 
 
 In Table \ref{tabemilia} we report the MVP number and CPU time together with the CPU time taken 
 by the preconditioner construction, which is, in this case, a not negligible part of the overall
 computing time. The DACG-Newton method is shown to take great advantage from the BFGS preconditioner even for $k_{\max} =1$,
 displaying a halving of the MV products and CPU time with respect to reusing the same initial preconditioner.
 Note that in this case a very low DACG accuracy ($\tau_{DACG} = 0.2$) is sufficient to provide a good initial vector for the subsequent
 Newton phase.
\begin{table}[h!]
\caption{Total number of iterations, number of MV products and CPU times for DACG-Newton algorithm
with various $k_{\max}$ values for problem {\sc Emilia-923}.}
\vspace{2mm}
\label{tabemilia}
\begin{center}
\begin{tabular}{|c|r|r|cc|rrr||rr|}
        \hline
        & &   IC & \multicolumn{2}{|c|}{DACG} &
        \multicolumn{3}{c||}{Newton} &
        \multicolumn{2}{c|}{TOT} \\
        \hline
        $\IL$&  $k_{\max}$ & CPU &  its & CPU & outer its & MVP & CPU & MVP & CPU \\
        \hline
  20 & 0 & 219.1    &565   & 780.2 &  147 &2657& 3788.9&3232 &  4788.2    \\
     & 1 & 219.1    &565   & 780.2 &   75 &1273 &1781.0&1838  &  2780.3   \\
     & 5 & 219.1    &565   & 780.2 &   63 &922 & 1299.0&1487 &  2299.5   \\
 \hline
 DACG &&219.1&  7512 &10313.9 & & & & 7512 &10533.0\\
 \hline
\end{tabular}
\end{center}
\end{table}

Matrix {\sc dblp} represents the Laplacian of a graph  describing
collaboration network of computer scientists. Nodes are authors and edges are collaborations in published papers, 
the edge weight is the number of publications shared by the authors \cite{franceschet}. 
 It is well-known that the smallest eigenvalue is zero (with multiplicity one) corresponding to an eigenvector
 with all unity components. We are therefore interested in the 20 smallest strictly positive eigenvalues.
 This matrix is also characterized by a high clustering of the eigenvalues, which makes the problem  difficult to solve
 due to the consequent ill-conditioning of the Jacobian matrices.
 In our runs, we used default parameters listed in Table \ref{default} with the exception of $\tau_{DACG} = 0.1$. The 
preconditioner density was in this case $\sigma = 1.84$. 
 
 The results are reported in Table \ref{tabdblp}. DACG with $\tau_{DACG} = 10^{-8}$ could not converge to the desired eigenpairs, 
 namely for $j=12$ it reached the maximum number of iterations (5000) which produced stagnation 
 in the convergence to the subsequent eigenpairs. Also DACG-Newton with $k_{\max} = 0$ reached 
 the maximum number of outer iterations already at level $j=2$. Setting $k_{\max}=1$ was instead
 sufficient to lead DACG-Newton to convergence.
 Also in this case higher values of $k_{\max}$ provided faster convergence.

\begin{table}[h!]
\caption{Total number of iterations, number of MV products and CPU times for DACG-Newton algorithm
with various $k_{\max}$ values for problem {\sc dblp}. Symbol $\ddag$ stands for no convergence.}
\vspace{2mm}
\label{tabdblp}
\begin{center}
\begin{tabular}{|c|r|cc|rrr||rr|}
        \hline
        &&  \multicolumn{2}{|c|}{DACG} &
        \multicolumn{3}{c||}{Newton} &
        \multicolumn{2}{c|}{TOT} \\
        \hline
        $\IL$&  $k_{\max}$ &  its & CPU & outer its & MVP & CPU & MVP & CPU \\
        \hline
	20 & 0      & 620 & 376.7 &   \ddag   & \ddag  & \ddag           & -- & --  \\
    & 1    &620   & 376.7 &  172 &3015 &1696.8&3625  &  2073.5   \\
    & 3    &620   & 376.7 &  153 &2157 &1239.7&2777  &  1616.4   \\
     & 5    &620   & 376.7 &  191 &2126 &1189.0&2746  &  1565.7   \\
     & 10   &620   & 376.7 &  156 &1964 & 1147.7&2580 &  1524.4   \\
 \hline
 DACG &&  \ddag & \ddag & & & & -- &--\\
 \hline
\end{tabular}
\end{center}
\end{table}

\section{Comparison with Jacobi-Davidson}
The algorithm presented and analyzed in the previous sections is here compared with the well-known Jacobi-Davidson (JD)
method. For the details of this method we refer to the original paper \cite{slejpenvdvSIMAX96}, 
as well as to successive works \cite{statopulos,fokslevdv,notay}
which analyze both theoretically and experimentally a number of variants of this method.
In this paper, we followed the implementation suggested in the previously cited work \cite{notay}, i.e. 
we made use of the PCG method as the inner solver, with the same initial preconditioner
as that used in the DACG-Newton method. Also the exit  tests used in the two methods are identical 
for both the outer iteration and the inner PCG solver.
In the JD implementation two parameters are crucial for its efficiency namely $m_{\min}$ and $m_{\max}$, the smallest and the
largest dimension of the subspace where the Rayleigh Ritz projection takes place.
After some attempts, we found that $m_{\min} = 5$ and $m_{\max} = 10$ were on the average the optimal values  of
such parameters.
In all the examples and both solvers we set $\IL = 20$.

The results of the comparison are summarized in Table \ref{compar} where we also
specify the tolerance $\tau_{DACG}$ selected for each problem. It is found that the two methods behave very
similarly, being Jacobi-Davidson slightly more performing on all the problems with the exception of matrix {\sc dblp}.

\begin{table}[h!]
	\caption{Comparison between DACG-Newton and Jacobi-Davidson.}
	\vspace{2mm}
\label{compar}
	\begin{center}
		\begin{tabular}{|l|l|lrrr|rrr|}
\hline
			problem &$n_{eig}$ &  \multicolumn{4}{c|}{DACG-Newton} & 
	           \multicolumn{3}{|c|}{Jacobi-Davidson} \\
		   & & $\tau_{DACG}$ &  MVP & outer its & CPU & MVP &outer its & CPU \\
	\hline
	{\sc hyb2d} & 20 & $0.01   $ &  $2241     $ & 87 &  6.76  & 1543 & 150 &  5.95  \\
{\sc monte-carlo} &  20 & $0.01   $ & $4042      $ & 114 & 33.36  & 2833 & 178 & 28.53  \\
   {\sc dblp} & 20 &  $0.1    $ & $	   2580  $ &156 & 1524.40   & 2916 & 187 & 1698.00 \\
 {\sc emilia-923} & 10 &  $0.2    $ & $	  1487   $ &63 & 2299.50  & 1472 &  95 & 2242.40 \\
	\hline
\end{tabular}
	\end{center}
\end{table}

\begin{figure}
        \caption{Convergence profile of the relative residual norm vs cumulative inner iterations
	for DACG-Newton and JD methods. Matrix {\tt emilia-923}. Eigenvalues $j=1$ (top figure), $j=2$ (middle figure) and
        $j=8$ (bottom figure).}
	\vspace{2mm}
	\newpage
	        \label{em1}
		\begin{center}
			\includegraphics[width=.65\textwidth]{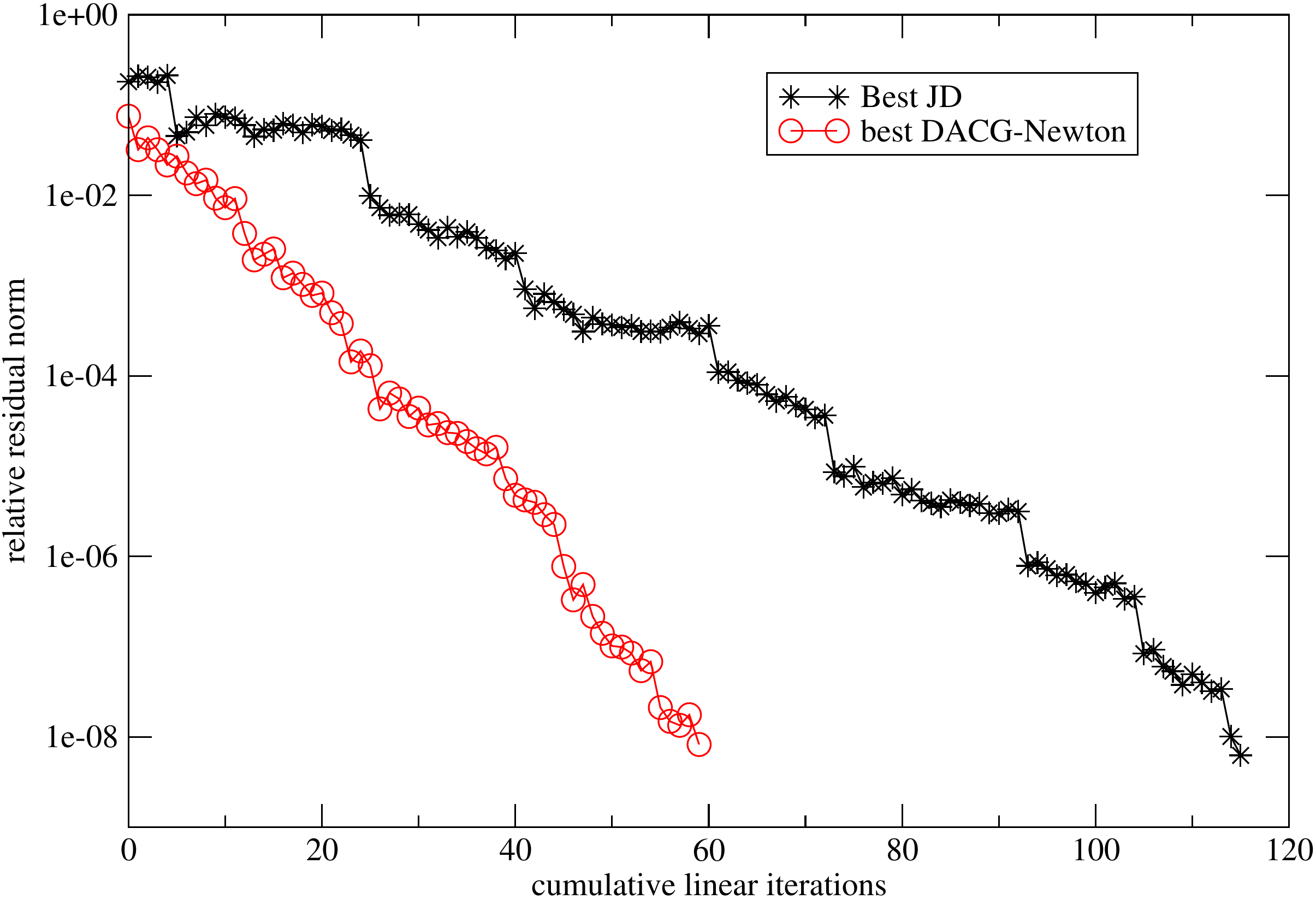}
		\end{center}
		\begin{center}
			\includegraphics[width=.65\textwidth]{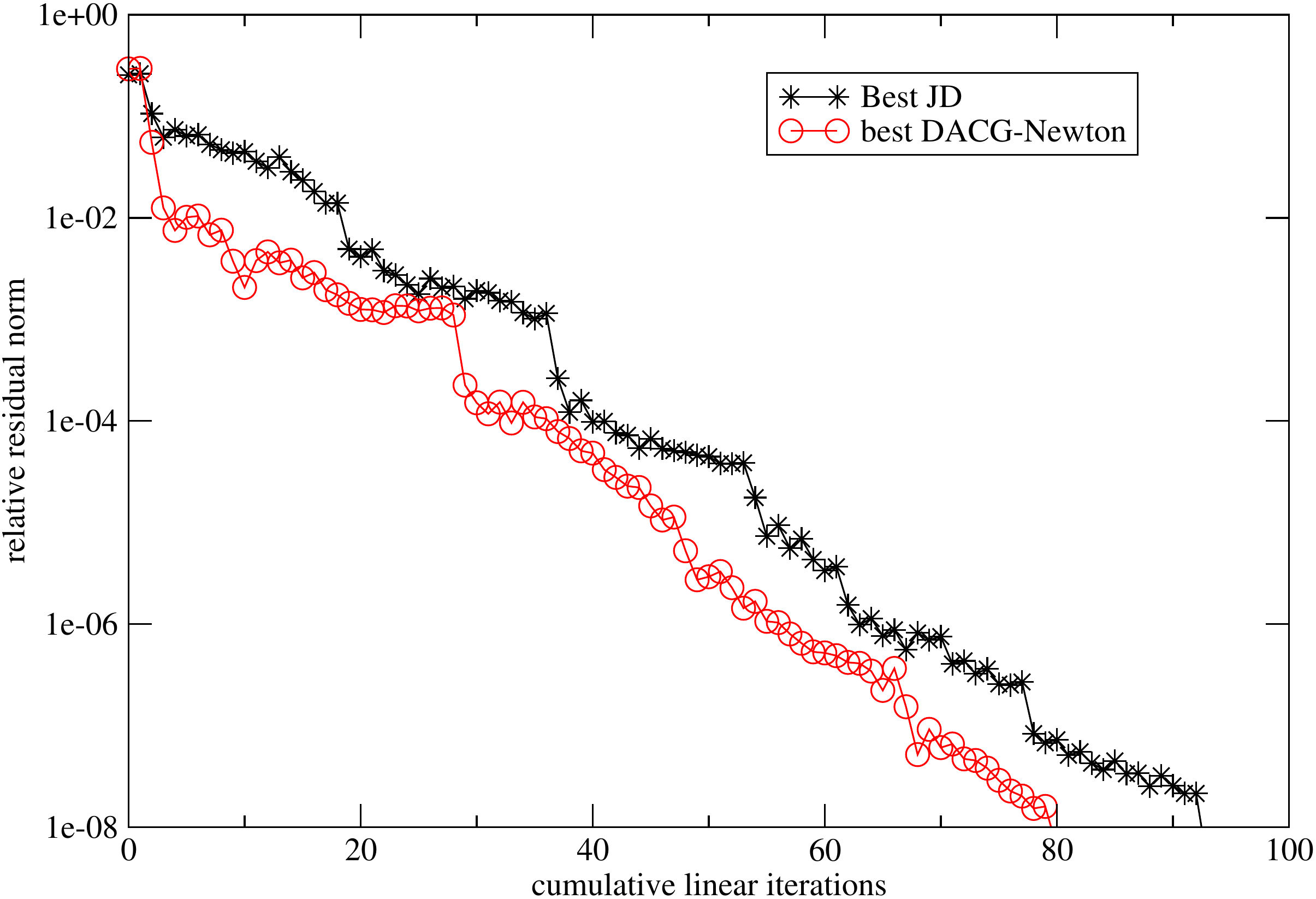}
		\end{center}
		\begin{center}
			\includegraphics[width=.65\textwidth]{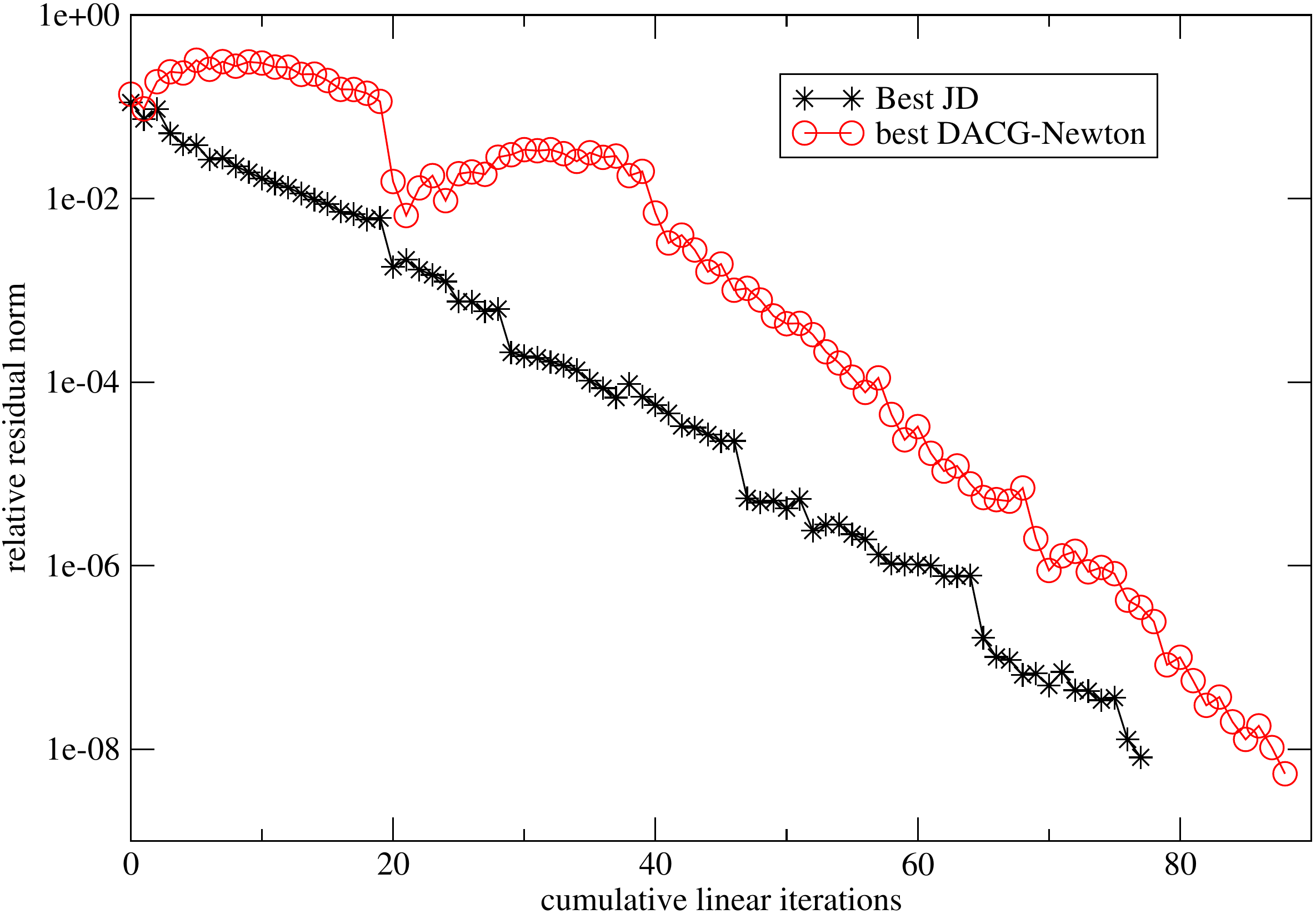}
		\end{center}
	\end{figure}

	Regarding problem {\sc emilia-923} we provide  in Figure \ref{em1} the  plot
of the relative residual norm vs cumulative linear iteration as per equation (\ref{residual}) for both DACG-Newton method
(Newton phase only) and JD,
corresponding to levels $j=1,2$ and $8$.
We can appreciate the very similar convergence profiles of these two methods. The DACG-Newton algorithm is faster for lower
$j$-values while the opposite holds for high values of $j$ where the Rayleigh-Ritz projection  seems to win against
the BFGS acceleration.

\section{Concluding remarks}
We have developed and theoretically analyzed a sequence of preconditioners aiming at accelerating the
PCG method in the solution of the correction equation. This equation is to be solved at each Newton iteration
to approximate a few eigenpairs of an SPD matrix.
Both theoretical analysis and experimental results onto a heterogeneous set of test matrices reveal that the BFGS 
sequence of preconditioners greatly improves the PCG efficiency as compared to using an initially evaluated fixed preconditioner.
The DACG-Newton method with the aforementioned preconditioner proves a robust and efficient algorithm
for the partial eigensolution of SPD matrices and makes ``pure'' Newton method competitive with Jacobi-Davidson
without making use of any Rayleigh-Ritz projection.
On the average the latter method proves a little bit more performing than the one proposed in this work,
and this is mainly due to the excessive DACG preprocessing time to devise a good initial vector for the subsequent Newton phase.
However, we wonder whether the preconditioning technique studied in this work may be seen as an alternative to Jacobi-Davidson or,
rather, a possible improvement of it.
At present we have neither theoretical nor experimental evidence in favor of this second option.
We therefore let as future work the attempt to insert our preconditioner in the framework of the Jacobi-Davidson method.
We will also compare the proposed algorithm with the recent implementations
of Inexact Arnoldi's (Lanczos') method \cite{FreiSpe09} where the inner linear system is solved with a variable accuracy
depending on the closeness to the wanted eigenvector.

The sequence of linear systems (\ref{gras}), the correction equation, has in common with the normal equations to be
solved at each interior point iteration the fact that the matrices involved get ill-conditioned as the iteration proceeds.
The BFGS sequence of preconditioners developed in this paper is expected to perform well also for preconditioning
the normal equations since:
\begin{enumerate}
	\item The bounded deterioration property, proved in Theorem \ref{theo}, is expected to mitigate the ill-conditioning of
		the linear systems toward the interior point solution.
	\item The approach described in the previous sections allows to perform only a (either complete or inexact)
	factorization of the initial Jacobian, thus saving on the cost of subsequent factorizations which
	is know to represent the main computational burden \cite{bergonzil02} of the whole interior point method for large
	and sparse constrained optimization problems.
\end{enumerate}
%\section*{Acknowledgements}

\end{document}